\documentclass[preprint,3p,times]{elsarticle}

\usepackage{etoolbox}
\usepackage{graphicx} 
\usepackage{amsthm}
\newtheorem{lemma}{Lemma}
\newtheorem{theorem}{Theorem}
\newtheorem{remark}{Remark}
\newtheorem{corollary}{Corollary} 
\usepackage{amssymb}
\usepackage{amsmath}
\usepackage{comment}
\usepackage{relsize}
 
\usepackage[hidelinks]{hyperref}
\usepackage{lineno}
\usepackage{subcaption}
\usepackage{caption}
\usepackage{overpic}
\usepackage{array}
\usepackage{xcolor}
\usepackage{mathtools}
\usepackage{stmaryrd}
\usepackage{booktabs}
\usepackage[numbers]{natbib}

\newcommand{\avg}[1]{\left\{\hspace*{-1pt}\left\{#1\right\}\hspace*{-1pt}\right\}}

\newcommand{\jump}[1]{\ensuremath{\left\llbracket #1 \right\rrbracket}}

\newcommand{\jumpbig}[1]{\ensuremath{\big\llbracket #1 \big\rrbracket}}

\newcommand{\bB}[1]{\boldsymbol{#1}}
\newcommand{\mbf}[1]{\mathbf{#1}}

\newcolumntype{P}[1]{>{\centering\arraybackslash}m{#1}}

\makeatother

\usepackage{cancel}
\usepackage{orcidlink}

\begin{document}

\begin{frontmatter}

\title{Entropy analysis and entropy stable DG methods for the 1D shallow water moment equations}

\author[UoG]{Julio Careaga\,\orcidlink{0000-0003-4466-6173}}
\author[liu]{Patrick Ersing\corref{cor1}\,\orcidlink{0009-0005-3804-5380}}
\ead{patrick.ersing@liu.se}
\cortext[cor1]{Corresponding author}
\author[UoG,GU]{Julian Koellermeier\orcidlink{0000-0002-8822-461X}}
\author[liu]{Andrew R. Winters\orcidlink{0000-0002-5902-1522}}

\affiliation[UoG]{organization={Bernoulli Institute, University of Groningen},
				addressline={Nijenborgh 9},
				city={Groningen},
				postcode={9747},
				country={The Netherlands}}
\affiliation[liu]{organization={Department of Mathematics, Linköping University},
	city={Linköping},
	postcode={58183}, 
	country={Sweden}}
\affiliation[GU]{organization={Department of Mathematics, Computer Science and Statistics, Ghent University},
	addressline={Krijgslaan 299 - S9}, 
	city={Gent},
	postcode={9000}, 
	country={Belgium}}

\begin{abstract}
\noindent We demonstrate that the one-dimensional shallow water moment equations satisfy an auxiliary entropy conservation law, where the entropy function corresponds to the total energy. Additionally, we show that the classical Newtonian slip friction and Manning friction terms are entropy dissipative with respect to the developed entropy variables.
The results from the continuous entropy analysis are used to construct an entropy stable and well-balanced nodal discontinuous Galerkin spectral element method for the spatial approximation.
Key to ensure the entropy stability of the scheme is the derivation of entropy conservative numerical fluxes that satisfy a discrete version of the entropy flux compatibility condition.
Finally, numerical examples demonstrate the performance of the scheme and validate the theoretical results.
\end{abstract}

\begin{keyword}
	Shallow water moment equations \sep energy equation \sep entropy conservation \sep well-balanced \sep discontinuous Galerkin method
	
	\MSC 65M12 \sep 65M20 \sep 35L60
\end{keyword}

\end{frontmatter}

\section{Introduction}

Shallow water equations (SWE) describe the evolution of a fluid under the assumption that the water thickness is orders of magnitude smaller than the horizontal scale. From the conservation of mass and momentum balance of the fluid for a free-boundary problem, the one-dimensional SWE consist of a two-by-two system for the water thickness and depth-averaged velocity. Despite the fact that SWE make a significant reduction in the difficulty of solving the incompressible Navier-Stokes equations, the reduction of the velocity profile to one depth-averaged variable may be overly strong and therefore lead to inaccuracies \cite{Kern2010, Sanvitale2016}. Shallow water moment equations (SWME) arise as an extension of the SWE, and assume that the velocity profile is polynomial in the vertical coordinate \cite{Kowalski2019}.
In SWME, the moments correspond to the coefficients of the polynomial expansion, which become part of the unknowns of the system together with the water height and the depth-averaged velocity. Therefore, increasing the order of this polynomial expansion enhances the accuracy when modelling velocity profiles with highly nonlinear variations with respect to depth. For a zero-order polynomial expansion the SWME coincide with the standard SWE. Further variants of the SWME have been proposed in the literature, two of the most relevant ones are: the hyperbolic shallow water moment equations in \cite{Koellermeier2020c}, and the shallow water linearized moment equations (SWLME) in \cite{Koellermeier2020i}. These models are developed from the general SWME. The first discards some terms, in order to guarantee hyperbolicity and in the case of the second model, also the analytical computation of steady states. Other works have extended the SWME to non-hydrostatic flows \cite{Scholz2023}, and to two-dimensional horizontal domains \cite{Bauerle2025}, while a general framework that extends the SWME to granular flows over inclined planes is proposed in \cite{CareagaArxiv2025}.

In the standard SWE, the governing equations ensure conservation of mass, while the momentum balance is described by a depth-averaged velocity equation. To derive an energy equation, in \cite{Gassner2016}, the authors use a skew-symmetric formulation of the SWE to obtain both the kinetic and potential energy equations, and the total energy as a result of their sum.  Analogous to the SWE, the derived total energy corresponds to a convex mathematical entropy function for the SWME, which enables the construction of entropy stable schemes.
In conservation laws, the mathematical entropy is related to the existence of physically relevant solutions, and also to the nonlinear stability of numerical schemes \cite{Tadmor1984,Tadmor1987,Tadmor2003}. A methodology to derive entropy stable discontinuous Galerkin methods can be found in \cite{Gassner2021} and \cite{Winters2021}, where the authors use summation-by-parts properties and a flux-differencing formulation to derive entropy conservative fluxes. The numerical schemes developed in \cite{Ersing2024, Ersing2025} use this approach for multilayer shallow water models, including nonconservative terms and ensuring the well-balancing property. In the same line, an entropy stable discontinuous Galerkin method in fluctuation form applied for the Saint-Venant-Exner system, is developed in \cite{ersing2026new}. In the context of SWLME, in \cite{FanArxiv2025}, the authors develop a path-conservative and well-balanced discontinuous Galerkin scheme by introducing equilibrium variables.

As with SWE, the SWME satisfy the conservation of mass, while the momentum balance can be described from the equations for the depth-averaged velocity and moments. However, due to the nonlinear coupling between the mean velocity equation and moment equations, the derivation of a kinetic energy equation becomes non-trivial. 
Building on the procedure presented in \cite{Gassner2016} for the SWE, we derive an energy equation for the SWME. 

The first aim of this work is to derive this energy equation for the SWME in the general form introduced in \cite{Kowalski2019}, where the total kinetic energy accounts for contributions from the moments.
Unlike previous contributions made for the SWLME \cite{FanArxiv2025, KoellermeierEnergy2025arxiv, Caballero2025}, we deal here with the nonlinear terms arising from the coupling between the average velocity and moments. 
Additionally, we demonstrate that the inclusion of two common friction terms has a dissipative contribution to the energy equation.
The second aim is to develop an entropy stable discontinuous Galerkin numerical scheme that can handle the non-conservative products, a common feature of shallow water models, as well as steep gradients due to possibly discontinuous solutions. 

This paper is organized as follows. In Section~\ref{sec:SWME}, the kinetic, potential and total energy equations are derived for the one-dimensional SWME. In addition, a convex entropy function, entropy flux, and entropy variables are established as a consequence of the total energy formulation. In Section~\ref{sec:Entropy:dissipation}, we show that the Newtonian slip and Newtonian Manning friction terms are, in fact, entropy dissipative. The numerical approximation is described in Section~\ref{sec:numerical:scheme}. The discontinuous Galerkin scheme and corresponding conditions for semi-discrete entropy preservation are discussed in Section~\ref{sec:DG:description} and the construction of the entropy conservative numerical flux is treated in Section~\ref{sec:EC-Moment-Fluxes}. In Section~\ref{sec:es_dgsem} we then construct an entropy stable scheme by adding suitable numerical dissipation at element interfaces.
A series of illustrative numerical examples showing the performance and reliability of the developed numerical scheme are presented in Section~\ref{sec:numerical:examples}. Conclusions and final remarks are included in Section~\ref{sec:conclusions}.


\section{Continuous entropy analysis for the SWME} \label{sec:SWME}

We consider the one-dimensional shallow water moment equations (SWME), originally introduced in \cite{Kowalski2019}, for the case of frictionless conditions and including a variable bathymetry. Given the spatial coordinate $x\in \Omega:=(x_{\rm a},x_{\rm b})$ with $x_{\rm a}<x_{\rm b}$, we assume that the bathymetry function is defined by the time-independent function $b=b(x)$. The sought unknowns in the shallow water moment model are the water height $h=h(x,t)$, the depth-averaged or mean velocity $u_m = u_m(x,t)$, and the $N$ moments $\alpha_i=\alpha_i(x,t)$ for all $x\in\Omega$ and $t>0$. The mean velocity together with the moments determine the horizontal component of the fluid velocity, denoted by $u$, through the following ansatz:
\begin{align}\label{eq:velocity:ansatz}
 u(x,\zeta,t) = u_m(x,t) + \sum_{i=1}^N\alpha_i(x,t)\phi_i(\zeta),
\end{align}
where $\zeta= (z-b)/h \in[0,1]$ is the scaled and normalized vertical coordinate, $z$ is the original vertical coordinate, and $\phi_1,\phi_2,\dots,\phi_N$ are the first $N$ shifted Legendre polynomials defined in the unit interval and normalized setting $\phi_i(0)=1$ for all $i=1,\dots,N$.
The polynomials $\phi_i$ are obtained from the classical Legendre polynomials $L_i=L_i(\zeta)$ defined in the interval $[-1,1]$, by
\begin{align}\label{eq:shifted_legendre_polynomial}
 \phi_i(\zeta) \coloneqq (-1)^i L_i(2\zeta-1),\qquad \text{ for} \quad i=1,\dots,N,\quad 0\leq \zeta\leq 1.
\end{align}
The first three shifted Legendre polynomials are
\begin{equation*}
	\phi_1(\zeta) = 1-2\zeta, \quad 
	\phi_2(\zeta) = 6\zeta^2 - 6\zeta + 1, \quad 
	\phi_3(\zeta) = -20\zeta^3 + 30\zeta^2 - 12\zeta+1.
\end{equation*}
In turn, the SWME model is built from the assumption that the velocity of the fluid is polynomial on the vertical coordinate. The SWME in \cite{Kowalski2019} describes the evolution of $h$, $u_m$ and $\alpha_i$ for $i=1,\dots,N$, and is formulated as follows:
\begin{subequations} \label{sys:SWME}
\begin{align}
\partial_t h+\partial_x\left(h u_m\right) &= 0, \label{eq:SWME_C}\\
\partial_t\!\left(h u_m\right)+\partial_x \Big(h u_m^2+ \Psi +\tfrac{1}{2} g h^2\Big) &=  - gh \partial_x b, \label{eq:SWME_M1}\\
\partial_t\!\left(h \alpha_i\right)+\partial_x\Big( 2 h u_m \alpha_i+\mathfrak{A}_i \Big) &=  u_m \partial_x\left(h \alpha_i\right) - \mathfrak{B}_i,\label{eq:SWME_alphai}
\end{align}
\end{subequations}
for all $x\in \Omega$ and $t>0$, where $\Psi$ in \eqref{eq:SWME_M1} is
\begin{align} \label{eq:SWLME_defPsi}
 \Psi \coloneqq h \sum_{i=1}^N\dfrac{1}{2i+1}\alpha_i^2 = h\Big(\tfrac{1}{3}\alpha_1^2 + \tfrac{1}{5}\alpha_2^2 + \dots + \tfrac{1}{2N+1}\alpha_N^2\Big).
\end{align}
The terms $\mathfrak{A}_i$ and $\mathfrak{B}_i$ in \eqref{eq:SWME_alphai} introduce additional nonlinearities to the system as they involve products of the moments and their partial derivatives, as well as factors of $h$. They are given by
\begin{align*}
\mathfrak{A}_i\coloneqq  h\sum_{j, k=1}^N A_{i j k} \alpha_j \alpha_k,\qquad \mathfrak{B}_i\coloneqq \sum_{j, k=1}^N \!B_{i j k} \alpha_k \partial_x\big(h \alpha_j\big),
\end{align*}
where the following coefficients are used
\begin{equation}\label{eq:Aijk_Bijk}
    A_{i j k} =(2i+1) \int_0^1 \phi_i(\zeta) \phi_j(\zeta) \phi_k(\zeta)\, {\rm d} \zeta, \qquad
    B_{i j k} = (2 i+1) \int_0^1 \phi_i^{\prime}(\zeta)\left(\int_0^\zeta \phi_j(s){\rm d} s\right) \phi_k(\zeta)\, {\rm d} \zeta.
\end{equation}
Note that $\mathfrak{B}_i$ in \eqref{eq:SWME_alphai} corresponds to a non-conservative term.
Defining the $N$-dimensional vectors $\bB{\alpha} := (\alpha_1,\alpha_2,\dots,\alpha_N)^{\rm T}$, $\bB{\mathfrak{A}} := (\mathfrak{A}_1,\mathfrak{A}_2,\dots,\mathfrak{A}_N)^{\rm T}$ and $\bB{\mathfrak{B}} := (\mathfrak{B}_1,\mathfrak{B}_2,\dots,\mathfrak{B}_N)^{\rm T}$, we can conveniently write \eqref{eq:SWME_alphai} as
\begin{align*}
 \partial_t\!\left(h \bB{\alpha}\right)+\partial_x\big( 2 h u_m \bB{\alpha}+\bB{\mathfrak{A}} \big) &=  u_m \partial_x\left(h\bB{\alpha}\right) -  \bB{\mathfrak{B}}.
\end{align*}
A variant of the SWME, the so-called shallow water linearized moment equations (SWLME), is developed in \cite{Koellermeier2020i}. 
Therein, the model takes $\bB{\mathfrak{A}} = \bB{\mathfrak{B}} = \bB{0}$ so that
\begin{subequations} \label{sys:SWLME}
\begin{align}
\partial_t h+\partial_x\left(h u_m\right) &= 0, \label{eq:SWLME_C}\\
\partial_t\!\left(h u_m\right)+\partial_x \Big(h u_m^2+ \Psi +\tfrac{1}{2} g h^2\Big) &=  - gh \partial_x b, \label{eq:SWLME_M1}\\
\partial_t\!\left(h \bB{\alpha}\right)+\partial_x( 2 h u_m \bB{\alpha}) &=  u_m \partial_x\left(h\bB{\alpha}\right).
\label{eq:SWLME_alphai}
\end{align}
\end{subequations}

Throughout the remainder of this section, we assume positivity of $h$ and that both the solution and bottom topography are sufficiently smooth. In what follows, we derive energy equations associated with~\eqref{sys:SWME}. This is based on the hierarchical structure of the SWME: in the zeroth-order case, where all moments vanish, the system reduces to the SWE. Consequently, the energy equation derivation developed for the SWE in~\cite{Gassner2016} can be directly applied to the system~\eqref{sys:SWME}.
The linearized model SWLME system \eqref{sys:SWLME} is hyperbolic and its eigenvalues are given by \cite[Theorem 1]{Koellermeier2020i}:
\begin{align} \label{eq:eigenvalues:SWLME}
 \lambda_{1,2} = u_m \pm \bigg( gh + \sum_{i=1}^N \dfrac{3\alpha_i^2}{2i+1}\bigg)^{1/2},\qquad \lambda_{i+2} = u_m,\qquad\text{for }i=1,\dots,N.
\end{align}
Unlike the SWLME, the general SWME system \eqref{sys:SWME} is only conditionally hyperbolic, and for certain values of the unknowns the hyperbolicity is lost \cite{Koellermeier2020c}.

In the next lemma, we provide an averaged kinetic energy equation for the mean velocity and a potential energy equation for system~\eqref{sys:SWME}.
\begin{lemma}\label{lem:partial:energy}
Let $h$, $u_m$ and $\bB{\alpha}$ be a smooth solution of the SWME~\eqref{sys:SWME}. Then, the following averaged kinetic and potential energy equations hold
\begin{align}
 \partial_t\left(\frac{h u_m^2}{2}\right) + \partial_x\left(\frac{h u_m^3}{2}\right) +g h u_m \partial_x(h+b)+ u_m \partial_x \Psi& =0, \label{eq:SWME_K}\\
\partial_t\left(g \frac{h^2}{2}+g h b\right)+g(h+b) \partial_x(h u_m)& =0, \label{eq:SWME_P}
\end{align}
where $\tfrac{1}{2}h u_m^2$ is the kinetic energy in SWE, and $\tfrac{1}{2}gh^2 + ghb$ is the potential energy.
\end{lemma}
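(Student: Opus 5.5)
Both identities follow from the mass equation \eqref{eq:SWME_C} and the mean momentum equation \eqref{eq:SWME_M1}; the moment equations \eqref{eq:SWME_alphai} are not needed. The derivation mirrors the one for the SWE in \cite{Gassner2016}. The moments enter only through the flux term $\Psi$ in \eqref{eq:SWME_M1}, and this term carries over unchanged as $u_m\partial_x\Psi$.

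\emph{Potential energy.} Since $b$ is time independent,
\[
\partial_t\big(\tfrac{g}{2}h^2+ghb\big)=g(h+b)\,\partial_t h .
\]
Substituting $\partial_t h=-\partial_x(hu_m)$ from \eqref{eq:SWME_C} gives \eqref{eq:SWME_P} immediately.

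\emph{Kinetic energy.} First I convert \eqref{eq:SWME_M1} into a non-conservative velocity equation. I expand
\[
\partial_t(hu_m)+\partial_x(hu_m^2)=h\big(\partial_t u_m+u_m\partial_x u_m\big)+u_m\big(\partial_t h+\partial_x(hu_m)\big),
\]
where the second bracket vanishes by \eqref{eq:SWME_C}. I also use $\partial_x(\tfrac12 gh^2)+gh\partial_x b=gh\,\partial_x(h+b)$. This yields
\[
h\,\partial_t u_m+hu_m\partial_x u_m+\partial_x\Psi+gh\,\partial_x(h+b)=0.
\]
Multiplying by $u_m$ gives
\[
\tfrac{h}{2}\partial_t u_m^2+\tfrac{hu_m}{2}\partial_x u_m^2+gh u_m\partial_x(h+b)+u_m\partial_x\Psi=0.
\]
Next I add $\tfrac{u_m^2}{2}$ times \eqref{eq:SWME_C}, which equals zero. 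The product rule then combines the first two terms into
\[
\partial_t\big(\tfrac12 hu_m^2\big)+\partial_x\big(\tfrac12 hu_m^3\big),
\]
and this is exactly \eqref{eq:SWME_K}.

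The computation is routine. The only point needing care is to keep $\Psi$ as a flux contribution: the term $u_m\partial_x\Psi$ must stay on the left-hand side and not be rewritten, so that the coupling to the moment equations is left to the later total-energy derivation. Positivity of $h$ is not actually required for these steps, because the argument never divides by $h$.
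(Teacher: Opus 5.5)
Your proof is correct and follows essentially the paper's route. The paper likewise first derives the non-conservative velocity equation by subtracting $u_m$ times \eqref{eq:SWME_C} from \eqref{eq:SWME_M1}, then averages it with \eqref{eq:SWME_M1} into a skew-symmetric form before multiplying by $u_m$; this is the same combination, $u_m$ times \eqref{eq:SWME_M1} minus $\tfrac{1}{2}u_m^2$ times \eqref{eq:SWME_C}, that you obtain by adding $\tfrac{1}{2}u_m^2$ times the mass equation at the end, and your direct derivation of \eqref{eq:SWME_P} makes self-contained the step the paper only cites from \cite{Gassner2016}.
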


\begin{proof}
To derive \eqref{eq:SWME_K}, we follow a similar approach as in the derivation of the kinetic energy equation in \cite{Gassner2016}. Subtracting~\eqref{eq:SWME_C} multiplied by $u_m$ from the momentum equation~\eqref{eq:SWME_M1}, we get
\begin{align}
    h \partial_t u_m +  hu_m \partial_x u_m +gh \partial_x (h+b) + \partial_x  \Psi&= 0. \label{eq:SWME_A}
\end{align}
Next, the arithmetic average between \eqref{eq:SWME_M1} and \eqref{eq:SWME_A} gives
\begin{align}
    \frac{1}{2}\Big(\partial_t(h u_m)+h \partial_t u_m\Big) + \frac{1}{2}\Big(\partial_x\!\left(hu_m^2\right)+ h u_m \partial_x u_m\Big)+g h \partial_x(h+b) + \partial_x  \Psi&=0, \label{eq:SWLME_S}
\end{align}
and multiplying this equation by $u_m$ and further grouping terms we arrive at the desired averaged kinetic energy equation \eqref{eq:SWME_K}. As~\eqref{eq:SWME_C} coincides with the equation for conservation of mass in the SWE, the potential energy equation remains the same. Therefore, we omit its derivation and refer the reader to \cite{Gassner2016}.
\end{proof}

Now, before deriving an equation for the total kinetic energy, including the contributions by the $N$ moment coefficients, we must first analyze the term containing $\partial_x\Psi$ in the averaged kinetic energy equation \eqref{eq:SWME_K}.
In the following auxiliary lemma, we provide a useful equation for the term $\Psi$ from  \eqref{eq:SWLME_defPsi}.
\begin{lemma} \label{lem:Psi:equation}
Let $h$, $u_m$ and $\bB{\alpha}$ be a smooth solution of the SWME~\eqref{sys:SWME}. Then the auxiliary variable $\Psi$ defined by \eqref{eq:SWLME_defPsi}, which depends on $h$ and $\bB{\alpha}$, satisfies the following equation
 \begin{align} \label{eq:SWME_Psi}
 \partial_t\! \left(\dfrac{\Psi}{2}\right) + \partial_x \!\left(u_m\dfrac{\Psi}{2}\right) + \Psi \partial_x u_m + \sum_{i=1}^N \dfrac{1}{2i+1}\alpha_i\big(\partial_x \mathfrak{A}_i + \mathfrak{B}_i\big)= 0.
\end{align}
\end{lemma}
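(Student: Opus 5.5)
We would prove \eqref{eq:SWME_Psi} by a direct computation. We first derive a non-conservative evolution equation for each moment $\alpha_i$, then multiply it by $h\alpha_i/(2i+1)$ and sum over $i$. Writing $\Psi = \sum_i \frac{1}{2i+1}h\alpha_i^2$, each summand can be expressed as $h\alpha_i^2 = (h\alpha_i)^2/h$. So $\partial_t(\tfrac12 h\alpha_i^2)$ is obtained from the mass equation \eqref{eq:SWME_C} and the moment equation \eqref{eq:SWME_alphai}, as in the derivation of the kinetic energy in Lemma~\ref{lem:partial:energy}.

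\emph{Step 1.} Expand \eqref{eq:SWME_alphai} as
\[
\partial_t(h\alpha_i) + 2\partial_x(hu_m\alpha_i) - u_m\partial_x(h\alpha_i) = -\partial_x\mathfrak{A}_i - \mathfrak{B}_i .
\]
The left-hand flux terms simplify to $\partial_x(hu_m\alpha_i) + h\alpha_i\partial_x u_m$. Subtracting $\alpha_i$ times \eqref{eq:SWME_C} then gives
\[
h\partial_t\alpha_i + hu_m\partial_x\alpha_i + h\alpha_i\partial_x u_m = -\partial_x\mathfrak{A}_i - \mathfrak{B}_i .
\]

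\emph{Step 2.} Multiply this equation by $\alpha_i$. The time derivative and advection terms combine as
\[
h\alpha_i(\partial_t\alpha_i + u_m\partial_x\alpha_i) = \partial_t\big(\tfrac12 h\alpha_i^2\big) + \partial_x\big(\tfrac12 hu_m\alpha_i^2\big),
\]
where the identity uses \eqref{eq:SWME_C} once more: the extra terms $\tfrac12\alpha_i^2(\partial_t h + \partial_x(hu_m))$ vanish. The remaining term is $h\alpha_i^2\partial_x u_m$. We therefore get
\[
\partial_t\big(\tfrac12 h\alpha_i^2\big) + \partial_x\big(\tfrac12 u_m h\alpha_i^2\big) + h\alpha_i^2\partial_x u_m + \alpha_i(\partial_x\mathfrak{A}_i + \mathfrak{B}_i) = 0.
\]

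\emph{Step 3.} Multiply by $1/(2i+1)$ and sum over $i=1,\dots,N$. Using \eqref{eq:SWLME_defPsi} and linearity, the first three terms become $\partial_t(\Psi/2)$, $\partial_x(u_m\Psi/2)$ and $\Psi\partial_x u_m$, which is exactly \eqref{eq:SWME_Psi}.

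The only delicate point is the bookkeeping in Step 1: the factor $2$ in the flux $2hu_m\alpha_i$ must be correctly split against the non-conservative term $u_m\partial_x(h\alpha_i)$. That split is what produces the coefficient $1$ (rather than $\tfrac12$ or $2$) in front of $\Psi\partial_x u_m$. The higher-order terms $\mathfrak{A}_i,\mathfrak{B}_i$ need no manipulation here; they are simply carried along, and smoothness and positivity of $h$ justify all the product-rule steps.
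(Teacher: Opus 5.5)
Your proof is correct and follows the paper's argument step for step. You subtract $\alpha_i$ times the mass equation from the moment equation to get $h\partial_t\alpha_i + h u_m\partial_x\alpha_i + h\alpha_i\partial_x u_m + \partial_x\mathfrak{A}_i + \mathfrak{B}_i = 0$, multiply by $\alpha_i$, absorb $\tfrac12\alpha_i^2$ times the mass equation, then divide by $2i+1$ and sum, exactly as the paper does; the remark that $h\alpha_i^2=(h\alpha_i)^2/h$ and the appeal to positivity of $h$ are unnecessary, since no division by $h$ occurs.
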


\begin{proof}
Subtracting \eqref{eq:SWME_C} multiplied with $\alpha_i$ from the moment equation \eqref{eq:SWME_alphai}, we obtain
\begin{align}
    h \partial_t \alpha_i + h \partial_x (u_m \alpha_i ) +\partial_x \mathfrak{A}_i + \mathfrak{B}_i &= 0, \qquad \text{for }i=1,\dots,N.
    \label{eq:SWME_Aalphai}
\end{align}
Next, the result from \eqref{eq:SWME_Aalphai} multiplied with $\alpha_i$ plus \eqref{eq:SWME_C} multiplied by $\tfrac{1}{2}\alpha_i^2$ yields the following auxiliary equations for each $i=1,\dots,N$
\begin{align}
\partial_t \left(h \frac{\alpha_i^2}{2}\right)  +  \partial_x \left( h u_m \frac{\alpha_i^2}{2}\right) + \alpha_i^2 h \partial_x u_m + \alpha_i \big(\partial_x \mathfrak{A}_i + \mathfrak{B}_i \big) &= 0, \label{eq:SWME_AUX}
\end{align}
where we have used the product rule and the identity
\begin{align*}
\alpha_i h \partial_x(u_m \alpha_i ) + \frac{\alpha_i^2}{2} \partial_x (h u_m) =\partial_x \left( h u_m \frac{\alpha_i^2}{2}\right) + \alpha_i^2 h \partial_x u_m.
\end{align*}
Then, dividing~\eqref{eq:SWME_AUX} by $2i+1$ and summing over the moments from $1$ to $N$, we obtain \eqref{eq:SWME_Psi}.
\end{proof}

Equation~\eqref{eq:SWME_Psi} from Lemma~\ref{lem:Psi:equation} can be seen as a partial kinetic energy equation. The total kinetic energy equation for the SWME can now be obtained by combining \eqref{eq:SWME_K} and \eqref{eq:SWME_Psi}. However, the last term on the left-hand side of \eqref{eq:SWME_Psi}, which involves $\mathfrak{A}_i$ and $\mathfrak{B}_i$, is not formulated in conservative form. Denoting this term by $\mathcal{Q}$, we have
\begin{align}\label{eq:coefs}
 \begin{aligned}
 \mathcal{Q}  \coloneqq \,
 \sum_{i=1}^N \dfrac{1}{2i+1}\alpha_i\big(\partial_x \mathfrak{A}_i + \mathfrak{B}_i\big)
 = \, \sum\limits_{i,j,k=1}^N \Big(\big( \widetilde{A}_{i j k} + \widetilde{B}_{i j k}\big)\alpha_i \alpha_k  \partial_x (h \alpha_j) + \widetilde{A}_{i j k} h \alpha_i \alpha_j  \partial_x \alpha_k \Big),
 \end{aligned}
\end{align}
where the unscaled coefficients are $\widetilde{A}_{ijk}\coloneqq A_{ijk}/(2i+1)$ and $\widetilde{B}_{ijk} \coloneqq B_{ijk}/(2i+1)$. The following lemma shows that $\mathcal{Q}$ can be written in conservative form, namely as the (spatial) partial derivative of a function.
\begin{lemma} \label{lem:Q:conservative}
The term $\mathcal{Q}$ can be written in conservative form as follows
\begin{align} \label{eq:SWME_conservative_coef}
\mathcal{Q}(x,t) = \partial_x\widehat{Q}(x,t),
\end{align}
where
\begin{align} \label{eq:SWME_defQhat}
\widehat{\mathcal{Q}}(x,t)\coloneqq  \sum\limits_{i,j,k=1}^N  \left( \widetilde{A}_{i j k} + \widetilde{B}_{i j k}\right) h(x,t) \alpha_i(x,t) \alpha_j(x,t) \alpha_k(x,t) .
\end{align}
\end{lemma}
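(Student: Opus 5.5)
The plan is a direct computation: expand $\partial_x\widehat{\mathcal{Q}}$ with the product rule, compare it term by term with the expression \eqref{eq:coefs} for $\mathcal{Q}$, and show that the difference vanishes because of an algebraic identity between the coefficients $\widetilde{A}_{ijk}$ and $\widetilde{B}_{ijk}$. That identity will come from one integration by parts in the definitions \eqref{eq:Aijk_Bijk}.

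\textbf{Step 1: expand $\partial_x\widehat{\mathcal{Q}}$.} Group the factors of $\widehat{\mathcal{Q}}$ as $(h\alpha_j)\,\alpha_i\alpha_k$. The product rule gives
\begin{align*}
\partial_x\widehat{\mathcal{Q}} = \sum_{i,j,k=1}^N \big(\widetilde{A}_{ijk}+\widetilde{B}_{ijk}\big)\Big(\alpha_i\alpha_k\,\partial_x(h\alpha_j) + h\alpha_j\alpha_k\,\partial_x\alpha_i + h\alpha_i\alpha_j\,\partial_x\alpha_k\Big).
\end{align*}
The first group coincides exactly with the first sum in \eqref{eq:coefs}. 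It therefore suffices to prove
\begin{align*}
\sum_{i,j,k=1}^N \big(\widetilde{A}_{ijk}+\widetilde{B}_{ijk}\big)\big(h\alpha_j\alpha_k\,\partial_x\alpha_i + h\alpha_i\alpha_j\,\partial_x\alpha_k\big) = \sum_{i,j,k=1}^N \widetilde{A}_{ijk}\, h\alpha_i\alpha_j\,\partial_x\alpha_k .
\end{align*}

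\textbf{Step 2: reduce to a coefficient identity.} In the term containing $\partial_x\alpha_i$, swap the summation labels $i\leftrightarrow k$, so that every derivative falls on $\alpha_k$. The required equality then becomes
\begin{align*}
\sum_{i,j,k=1}^N \big(\widetilde{A}_{ijk} + \widetilde{B}_{kji} + \widetilde{B}_{ijk}\big)\, h\alpha_i\alpha_j\,\partial_x\alpha_k = 0 .
\end{align*}
Here we used that $\widetilde{A}_{ijk} = \int_0^1 \phi_i\phi_j\phi_k\,{\rm d}\zeta$ is fully symmetric in its indices, so $\widetilde{A}_{kji}=\widetilde{A}_{ijk}$. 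It is enough to show the pointwise identity
\begin{align*}
\widetilde{B}_{ijk}+\widetilde{B}_{kji} = -\widetilde{A}_{ijk} \quad \text{for all } i,j,k .
\end{align*}

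\textbf{Step 3: prove the identity.} Let $\Phi_j(\zeta) := \int_0^\zeta \phi_j(s)\,{\rm d}s$. Then
\begin{align*}
\widetilde{B}_{ijk}+\widetilde{B}_{kji} = \int_0^1 \big(\phi_i'\phi_k + \phi_k'\phi_i\big)\Phi_j\,{\rm d}\zeta = \int_0^1 (\phi_i\phi_k)'\,\Phi_j\,{\rm d}\zeta .
\end{align*}
Integrating by parts gives
\begin{align*}
\int_0^1 (\phi_i\phi_k)'\,\Phi_j\,{\rm d}\zeta = \big[\phi_i\phi_k\Phi_j\big]_0^1 - \int_0^1 \phi_i\phi_j\phi_k\,{\rm d}\zeta .
\end{align*}
The boundary term vanishes at both ends:
\begin{itemize}
\item at $\zeta=0$, because $\Phi_j(0)=0$ trivially;
\item at $\zeta=1$, because $\Phi_j(1)=\int_0^1\phi_j\,{\rm d}\zeta=0$ for $j\ge 1$, by orthogonality of the shifted Legendre polynomials to $\phi_0\equiv 1$.
\end{itemize}
The remaining integral is $\widetilde{A}_{ijk}$, which proves the identity and hence \eqref{eq:SWME_conservative_coef}.

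The step most likely to cause trouble is Step 2: choosing the relabelling so that the $\widetilde{B}$ terms pair up into a total derivative $(\phi_i\phi_k)'$. The orthogonality fact $\Phi_j(1)=0$ is the only property of the Legendre basis used beyond the definitions.
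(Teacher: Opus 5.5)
Your proof is correct and follows the paper's argument essentially step for step: you expand $\partial_x\widehat{\mathcal{Q}}$ by the product rule, relabel $i\leftrightarrow k$ to reduce to the coefficient identity $\widetilde{B}_{ijk}+\widetilde{A}_{kji}+\widetilde{B}_{kji}=0$ (your form uses the symmetry $\widetilde{A}_{kji}=\widetilde{A}_{ijk}$ and is equivalent), and then integrate by parts once with $\Phi_j(0)=\Phi_j(1)=0$, as in the paper's Appendix A. Your explicit justification of $\Phi_j(1)=0$ via orthogonality to $\phi_0\equiv 1$ is a small addition, since the paper only asserts it.
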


\begin{proof}
Computing the partial derivative of $\widehat{\mathcal{Q}}$ with respect to $x$ in \eqref{eq:SWME_defQhat} and applying product rule, we have
\begin{align*}
  \partial_x\widehat{\mathcal{Q}}
  & =
  \sum\limits_{i,j,k=1}^N \bigg\{\!\left( \widetilde{A}_{i j k} + \widetilde{B}_{i j k}\right)\alpha_i\alpha_k\partial_x (h\alpha_j)   + \left( \widetilde{A}_{i j k} + \widetilde{B}_{i j k} + \widetilde{A}_{kji} + \widetilde{B}_{kji}\right)h\alpha_i\alpha_j\partial_x \alpha_k\bigg\}.
 \end{align*}
We observe that the right-hand side of the above equation is equal to $\mathcal{Q}$ \eqref{eq:coefs} provided the following identity holds
\begin{align}
     \widetilde{B}_{ijk} +  \widetilde{A}_{kji} + \widetilde{B}_{kji} & = 0.
     \label{eq:condition}
\end{align}
As shown in \ref{sec:appendix:A}, the condition \eqref{eq:condition} is true for all $i,j,k=1,\dots,N$, which concludes the proof.
\end{proof}

Then, we define the vector of conservative variables as $\bB{u} = (h,hu_m,h\alpha_1,h\alpha_2,\dots,h\alpha_N)^{\rm T}\in \mathbb{R}^{N+2}$ for the SWME \eqref{sys:SWME} (or SWLME \eqref{sys:SWLME}), which in short notation can be written as $\bB{u} = \big(h,hu_m,h\bB{\alpha}^{\rm T}\big)^{\rm T}$. We are now in position to establish an equation for the total energy in the SWME model, and further determine the associated flux related to the energy balance.
\begin{theorem}\label{thm:tot:energy}
Let $h$, $u_m$ and $\bB{\alpha}$ be a smooth solution of the SWME~\eqref{sys:SWME}. Then, the total energy $\mathbb{E}$ is a conserved quantity with total energy equation
\begin{align}\label{eq:total:energy:conservation}
 \partial_t \mathbb{E}(\bB{u}) + \partial_x \mathbb{F}(\bB{u}) = 0,
\end{align}
where the total energy $\mathbb{E}$ and total flux $\mathbb{F}$ are defined as
\begin{align}
& \mathbb{E}(\bB{u}) \coloneqq \frac{h u_m^2}{2}+ \frac{ h}{2}\sum\limits_{i=1}^N\frac{\alpha_i^2}{2i+1}+ g \frac{h^2}{2}+g h b, \label{eq:Total:Energy}\\
& \begin{aligned}
\mathbb{F}(\bB{u}) & \coloneqq \frac{h u_m^3}{2} + h u_m \frac{3}{2} \sum\limits_{i=1}^N \frac{\alpha_i^2}{2i+1} + g h u_m (h+b) + \sum\limits_{i,j,k=1}^N  \frac{1}{2i+1} \left( A_{i j k} + B_{i j k}\right) h \alpha_i \alpha_j \alpha_k.
\end{aligned}\label{eq:Total:Flux}
\end{align}
\end{theorem}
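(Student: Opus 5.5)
The plan is to add the three energy balances already derived and check that every non-conservative term either cancels or becomes an exact derivative. I would sum the averaged kinetic energy equation \eqref{eq:SWME_K} and the potential energy equation \eqref{eq:SWME_P} from Lemma~\ref{lem:partial:energy}, together with the partial kinetic energy equation \eqref{eq:SWME_Psi} from Lemma~\ref{lem:Psi:equation}. The time derivatives combine directly:
\[
\partial_t\Big(\tfrac12 h u_m^2\Big)+\partial_t\Big(\tfrac12\Psi\Big)+\partial_t\Big(\tfrac12 g h^2+ghb\Big)=\partial_t\mathbb{E}(\bB{u}),
\]
because $\tfrac12\Psi=\tfrac h2\sum_i\alpha_i^2/(2i+1)$. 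It then remains to show that the spatial and non-conservative terms add up to $\partial_x\mathbb{F}$.

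Next I would group the gravitational terms. Since $b$ is time independent, the sum $g h u_m\partial_x(h+b)+g(h+b)\partial_x(hu_m)$ equals $\partial_x\big(g h u_m(h+b)\big)$ by the product rule. For the $\Psi$-terms, I would collect $u_m\partial_x\Psi$ from \eqref{eq:SWME_K} with $\partial_x(u_m\Psi/2)+\Psi\partial_x u_m$ from \eqref{eq:SWME_Psi}. This gives
\[
\partial_x\big(u_m\Psi/2\big)+\partial_x\big(u_m\Psi\big)=\partial_x\Big(\tfrac32 h u_m\sum_i\tfrac{\alpha_i^2}{2i+1}\Big),
\]
which is the second term of $\mathbb{F}$. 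The term $\partial_x(hu_m^3/2)$ already appears in conservative form.

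Finally, the leftover term $\sum_i\frac{1}{2i+1}\alpha_i(\partial_x\mathfrak{A}_i+\mathfrak{B}_i)$ is exactly $\mathcal{Q}$ as defined in \eqref{eq:coefs}. By Lemma~\ref{lem:Q:conservative}, $\mathcal{Q}=\partial_x\widehat{\mathcal{Q}}$. Using $\widetilde{A}_{ijk}+\widetilde{B}_{ijk}=(A_{ijk}+B_{ijk})/(2i+1)$, $\widehat{\mathcal{Q}}$ coincides with the last term of \eqref{eq:Total:Flux}. Collecting all terms gives \eqref{eq:total:energy:conservation}.

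The only step that is not routine is the conservative form of $\mathcal{Q}$, which depends on the coefficient identity \eqref{eq:condition}. That identity has already been established in Lemma~\ref{lem:Q:conservative}, so it can be invoked directly. What remains is careful bookkeeping of the $\Psi$-terms, in particular recovering the factor $\tfrac32$ in the flux.
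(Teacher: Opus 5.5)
Your proposal is correct and is essentially the paper's proof. The paper also adds \eqref{eq:SWME_K} and \eqref{eq:SWME_Psi}, invokes Lemma~\ref{lem:Q:conservative} to replace $\mathcal{Q}$ by $\partial_x\widehat{\mathcal{Q}}$ (giving a total kinetic energy equation with flux $\tfrac{3}{2}u_m\Psi+\widehat{\mathcal{Q}}$), and then adds \eqref{eq:SWME_P}; you sum all three at once and spell out the product-rule recombinations the paper leaves implicit, and the identity $g h u_m\partial_x(h+b)+g(h+b)\partial_x(hu_m)=\partial_x\big(g h u_m(h+b)\big)$ needs only the product rule, not the time independence of $b$ (which enters through \eqref{eq:SWME_P}).
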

\begin{proof}
We first obtain the total kinetic energy equation \eqref{eq:SWME_KtotalFinal} by adding the averaged kinetic equation \eqref{eq:SWME_K} from Lemma~\ref{lem:partial:energy} to Equation~\eqref{eq:SWME_Psi} in Lemma~\ref{lem:Psi:equation}, and apply the result from Lemma~\ref{lem:Q:conservative} to have
\begin{align}
    \partial_t\left(\frac{h u_m^2}{2}+ \dfrac{\Psi}{2}\right) + \partial_x \left(\frac{h u_m^3}{2}\right) +g h u_m \partial_x(h+b) + \partial_x \left(  \frac{3}{2} u_m\Psi + \widehat{\mathcal{Q}}\right) & = 0, \label{eq:SWME_KtotalFinal}
\end{align}
where $\Psi$ is defined in \eqref{eq:SWLME_defPsi}, and $\widehat{\mathcal{Q}}$ is defined in \eqref{eq:SWME_defQhat}. The potential equation, on the other hand, is given by Equation~\eqref{eq:SWME_P}, in Lemma~\ref{lem:partial:energy}. Then, combining \eqref{eq:SWME_KtotalFinal} and \eqref{eq:SWME_P}, we obtain the desired total energy equation \eqref{eq:total:energy:conservation}.
\end{proof}

Note that the total energy function $\mathbb{E}$ in Theorem~\ref{thm:tot:energy} acts as a convex mathematical entropy function of system~\eqref{sys:SWME} with corresponding entropy flux $\mathbb{F}$, where the convexity can be readily verified from positive definiteness of the Hessian of $\mathbb{E}$, which is shown in Lemma~\ref{lemma:spd:hessian} in \ref{sec:appendix:C}. The next corollary introduces the entropy variables $\bB{w}$ together with the entropy Hessian $\mbf{H}^{-1}$ related to \eqref{sys:SWME}.

\begin{corollary} \label{cor:entropy:variables}
 The entropy variables, denoted by $w_1,w_2,\dots,w_{N+2}$, associated to the SWME system \eqref{sys:SWME} are:
 \begin{align}
    w_1 & = -\frac{u_m^2}{2}- \frac{1}{2}\sum\limits_{i=1}^N\frac{\alpha_i^2}{2i+1}+g(h+b), \qquad
    w_2 = u_m,\qquad
    w_{i+2} = \frac{\alpha_i}{2i+1}, \label{eq:SWME:Entropy:Variables}
\end{align}
for all $i=1,\dots,N$. Furthermore, the Hessian of $\mathbb{E}$ with respect to the conservative variables $\bB{u}$ is given by
\begin{equation}\label{eq:entropy:hessian}
    \mathbf{H^{-1}} = \frac{1}{h}\begin{pmatrix}
        gh + u_m^2 + \sum\limits_{i=1}^N \frac{\alpha_i^2}{2i + 1} & -u_m & - \frac{\alpha_1}{3} & \dots & -\frac{\alpha_N}{2N + 1} \\
        -u_m & 1 & 0 & \dots & 0 \\
        -\frac{\alpha_1}{3} & 0 & \frac{1}{3} & \ddots & \vdots \\
        \vdots & \vdots & \ddots & \ddots & 0 \\
        -\frac{\alpha_N}{2N+1} & 0 & \dots & 0 & \frac{1}{2N + 1}
    \end{pmatrix}.
\end{equation}
\end{corollary}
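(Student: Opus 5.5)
The plan is to rewrite $\mathbb{E}$ from \eqref{eq:Total:Energy} explicitly in the conservative variables $\bB{u}=(u_1,u_2,\dots,u_{N+2})^{\rm T}=(h,hu_m,h\alpha_1,\dots,h\alpha_N)^{\rm T}$ and differentiate. With $q\coloneqq hu_m$ and $p_i\coloneqq h\alpha_i$ we have
\begin{align*}
\mathbb{E}(\bB{u}) = \frac{q^2}{2h} + \frac{1}{2h}\sum_{i=1}^N \frac{p_i^2}{2i+1} + \frac{g}{2}h^2 + g h b ,
\end{align*}
where $b=b(x)$ does not depend on $\bB{u}$. The entropy variables are then obtained as $\bB{w}=\partial\mathbb{E}/\partial\bB{u}$.

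For the gradient, differentiating with respect to $h$ gives $-q^2/(2h^2) - \sum_i p_i^2/(2(2i+1)h^2) + gh + gb$. Substituting back $q/h=u_m$ and $p_i/h=\alpha_i$ yields exactly $w_1$ in \eqref{eq:SWME:Entropy:Variables}. Likewise $\partial\mathbb{E}/\partial q = q/h = u_m = w_2$ and $\partial\mathbb{E}/\partial p_i = p_i/((2i+1)h) = \alpha_i/(2i+1) = w_{i+2}$.

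For the Hessian, I differentiate $\bB{w}$ once more with respect to $\bB{u}$, using the elementary derivatives
\begin{align*}
\partial_h u_m = -\frac{u_m}{h},\quad \partial_q u_m = \frac1h,\quad \partial_h \alpha_i = -\frac{\alpha_i}{h},\quad \partial_{p_i}\alpha_j = \frac{\delta_{ij}}{h}.
\end{align*}
These give the following entries:
\begin{itemize}
\item $\partial_h w_1 = -u_m\partial_h u_m - \sum_i \frac{\alpha_i}{2i+1}\partial_h\alpha_i + g = \frac{1}{h}\big(u_m^2+\sum_i\frac{\alpha_i^2}{2i+1}+gh\big)$;
\item $\partial_q w_1 = -u_m/h$ and $\partial_{p_i} w_1 = -\alpha_i/((2i+1)h)$;
\item $\partial_h w_2 = -u_m/h$, $\partial_q w_2 = 1/h$, and $\partial_{p_i} w_2 = 0$;
\item $\partial_h w_{i+2} = -\alpha_i/((2i+1)h)$ and $\partial_{p_j} w_{i+2} = \delta_{ij}/((2i+1)h)$.
\end{itemize}
Assembling these entries reproduces \eqref{eq:entropy:hessian}, and the matrix is symmetric, as it must be for a Hessian.

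There is no real obstacle here. The only points needing care are to keep $b$ fixed when differentiating, since it is not a conservative variable, and to track the chain rule through $u_m$ and $\alpha_i$ when differentiating $w_1$ with respect to $h$. As a consistency check, one can verify $\bB{w}^{\rm T}\partial_t\bB{u}=\partial_t\mathbb{E}$, and note that the structure matches the SWE case when $N=0$.
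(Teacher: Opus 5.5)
Your proposal is correct and follows the paper's own route: express $\mathbb{E}$ in the conservative variables, take the gradient to get $\bB{w}$, and take the Jacobian of $\bB{w}$ with respect to $\bB{u}$ (holding $b$ fixed) to get the Hessian. Your entry-by-entry computations check out, and in fact give more detail than the paper's brief proof.
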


\begin{proof}
 The entropy variables are defined as the partial derivative of the entropy function $\mathbb{E}$ with respect to the conservative variables $(h,hu_m,h\bB{\alpha})$. Therefore computing $w_1=\partial_h \mathbb{E}$, $w_2 = \partial_{hu_m}\mathbb{E}$ and $w_{i+2} = \partial_{h\alpha_i} \mathbb{E}$ for all $i=1,\dots,N$, yields the entropy variables. The Hessian of $\mathbb{E}$ is then obtained as the Jacobian of the entropy variables, with respect to the conservative variables.
\end{proof}

\begin{remark}
Notice that the energy function is independent of the coefficients $A_{ijk}$ and $B_{ijk}$. Thus, the entropy function and entropy variables corresponding to the linearized model SWLME \eqref{sys:SWLME} are the same function $\mathbb{E}$ in \eqref{eq:Total:Energy} and vector $\bB{w}$ from Corollary~\ref{cor:entropy:variables}, respectively. The total flux, which also corresponds to the entropy flux for the SWLME and was first derived in \cite{Caballero2025} and later more systematically in \cite{KoellermeierEnergy2025arxiv} is, on the other hand, given by 
\begin{align*}
\mathbb{F}_{\tt LM}(\bB{u}) & \coloneqq \frac{h u_m^3}{2} + h u_m \frac{3}{2} \sum\limits_{i=1}^N \frac{\alpha_i^2}{2i+1} + g h u_m (h+b).
\end{align*}
\end{remark}

\smallskip
In the next section, we discuss the case of including the friction effect, which turns out to be a source term in the model equations, and establish that two widely used friction terms described in the literature are indeed entropy dissipative with respect to the developed entropy variables. 

\subsection{Entropy dissipation for the friction term}\label{sec:Entropy:dissipation}

When friction terms are taken into account, the SWME \eqref{sys:SWME} (or SWLME \eqref{sys:SWLME}) become non-homogeneous and a source term is included. For classical Newtonian slip friction, this source term is defined as \cite{Kowalski2019}
\begin{equation}\label{eq:friction:Nslip}
	\mbf{S}_{\rm Ns}(\bB{u}) \coloneqq
	-\frac{\nu}{\lambda}u_b
	\begin{pmatrix}
		0 \\
		1\\
		\bB{r}\\
	\end{pmatrix}
	- \dfrac{\nu}{h}
	\begin{pmatrix}
		0 \\
		0\\
		\bB{C}\bB{\alpha}\\
	\end{pmatrix},
\end{equation}
where $u_b := u_m(x,t) + \alpha_1(x,t) + \alpha_2(x,t) + \cdots + \alpha_N(x,t)$ is the bottom velocity, the vector $\bB{r}\in\mathbb{R}^N$ is defined componentwise, where each component is given by $r_i = 2i+1$ for all $i=1,\dots,N$, and the matrix
${\bB{C}}=({C}_{ij})\in \mathbb{R}^{N\times N}$ is defined for each component as
\begin{align}
 \label{eq:CMatrix}
 {C}_{ij} \coloneqq (2i+1)\int_0^1\phi_i'(\zeta)\phi_j'(\zeta)\,{\rm d}\zeta\qquad\text{for }i,j=1,\dots,N.
\end{align}
The constant $\lambda>0$ is the so-called slip length, and $\nu>0$ is a viscosity related parameter. The first term in \eqref{eq:friction:Nslip} corresponds to the bottom friction given by the slip condition, while the second term containing the $\bB{C}$ matrix is related to the bulk friction.

In the next lemma, we show that the friction term $\mbf{S}_{\rm Ns}$ is in fact entropy dissipative with respect to the entropy variables $\bB{w}$ defined in Corollary~\ref{cor:entropy:variables}.

\begin{lemma} \label{lem:PNs:dissipative}
 The friction term $\mbf{S}_{\rm Ns}$ defined by \eqref{eq:friction:Nslip} is entropy dissipative with respect to $\bB{w}$, namely
 \begin{align}
  \bB{w}^{\rm T}\mbf{S}_{\rm Ns}(\bB{u})\leq 0.
 \end{align}
\end{lemma}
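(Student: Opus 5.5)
Since the first component of $\mbf{S}_{\rm Ns}$ vanishes, $w_1$ drops out. Using $w_2=u_m$ and $w_{i+2}=\alpha_i/(2i+1)$ from Corollary~\ref{cor:entropy:variables}, I would expand the product directly:
\begin{align*}
 \bB{w}^{\rm T}\mbf{S}_{\rm Ns}(\bB{u})
 = -\frac{\nu}{\lambda}u_b\Big(u_m + \sum_{i=1}^N \frac{\alpha_i}{2i+1}(2i+1)\Big)
 - \frac{\nu}{h}\sum_{i,j=1}^N \frac{\alpha_i}{2i+1}C_{ij}\alpha_j .
\end{align*}
The weights $r_i=2i+1$ cancel exactly with the scaling $1/(2i+1)$ in the entropy variables. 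The first bracket therefore equals $u_m+\sum_i\alpha_i=u_b$, and the bottom-friction contribution becomes $-\tfrac{\nu}{\lambda}u_b^2\le 0$, using $\nu,\lambda>0$.

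For the bulk-friction term I would use the definition \eqref{eq:CMatrix}. The factor $(2i+1)$ in $C_{ij}$ cancels, which gives
\begin{align*}
 \sum_{i,j=1}^N \frac{\alpha_i}{2i+1}C_{ij}\alpha_j
 = \int_0^1 \Big(\sum_{i=1}^N\alpha_i\phi_i'(\zeta)\Big)\Big(\sum_{j=1}^N\alpha_j\phi_j'(\zeta)\Big){\rm d}\zeta
 = \int_0^1 \Big(\sum_{i=1}^N\alpha_i\phi_i'(\zeta)\Big)^2{\rm d}\zeta \ \ge 0.
\end{align*}
Since $h>0$ and $\nu>0$, this term contributes $-\tfrac{\nu}{h}\int_0^1(\partial_\zeta u)^2\,{\rm d}\zeta\le 0$, where $u$ is the ansatz velocity \eqref{eq:velocity:ansatz}. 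Adding the two contributions yields $\bB{w}^{\rm T}\mbf{S}_{\rm Ns}\le 0$.

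No step here is a real obstacle; the key observation is the two cancellations of the factor $2i+1$. The point to check carefully is that the scaled matrix $\operatorname{diag}(1/(2i+1))\,\bB{C}$ is exactly the Gram matrix of the derivatives $\phi_i'$, and hence symmetric positive semidefinite. This follows from its integral representation, even though $\bB{C}$ itself is not symmetric.
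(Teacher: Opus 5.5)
Your proof is correct and follows the paper's argument essentially verbatim. The factors $2i+1$ from $\bB{r}$ and $\bB{C}$ cancel against those in the entropy variables, leaving $-\frac{\nu}{\lambda}u_b^2$ plus $-\frac{\nu}{h}\int_0^1\big(\sum_i \alpha_i\phi_i'(\zeta)\big)^2\,{\rm d}\zeta$, both nonpositive; your remarks that $\bB{C}$ itself is not symmetric and that the bulk term equals $-\frac{\nu}{h}\int_0^1(\partial_\zeta u)^2\,{\rm d}\zeta$ are accurate observations the paper leaves implicit.
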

\begin{proof}
Multiplying the entropy vector $\bB{w}$ to the friction term $\mbf{S}_{\rm Ns}$ we have
\begin{equation}
	\begin{aligned}
	\bB{w}^{\rm T}\mbf{S}_{\rm Ns} &= -\frac{\nu}{\lambda}u_b^2-\dfrac{\nu}{h}\sum_{i,j=1}^N\left(\int_0^1 \phi_i'(\zeta)\phi_j'(\zeta){\rm d}\zeta\right)\alpha_i \alpha_j\\
	&=
	-\frac{\nu}{\lambda}u_b^2
	- \frac{\nu}{h} \int_0^1 \bigg(\sum_{i=1}^N\phi_i'(\zeta) \alpha_i\bigg) \bigg(\sum_{j=1}^N \phi_j'(\zeta) \alpha_j \bigg) {\rm d}\zeta
	\\&=
	-\frac{\nu}{\lambda} u_b^2
	- \frac{\nu}{h} \int_0^1 \bigg(\sum_{i=1}^N\phi_i'(\zeta) \alpha_i\bigg)^2 {\rm d}\zeta \leq 0,
	\end{aligned}
\end{equation}
which completes the proof.
\end{proof}
\noindent An alternative friction term can be obtained by using a Manning law for the bottom friction \cite{Manning1891} as in \cite{Garres-Diaz2021a}, which is given by
\begin{equation}\label{eq:friction:NManning}
	\mbf{S}_{\rm NM}(\bB{u}) \coloneqq
	-\frac{\rho g n^2}{h^{1/3}}\big| u_b \big| u_b
	\begin{pmatrix}
		0 \\
		1\\
		\bB{r}\\
	\end{pmatrix}
	- \dfrac{\nu}{h}
	\begin{pmatrix}
		0 \\
		0\\
		\bB{C}\bB{\alpha}\\
	\end{pmatrix},
\end{equation}
where $n>0$ is called the Manning coefficient. In the next lemma, we show that $\mbf{S}_{\rm NM}$ is also entropy dissipative.

\begin{lemma} \label{lem:PNM:dissipative}
 The friction term $\mbf{S}_{\rm NM}$ defined by \eqref{eq:friction:NManning} is entropy dissipative with respect to $\bB{w}$.
\end{lemma}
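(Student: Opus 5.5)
The plan mirrors the proof of Lemma~\ref{lem:PNs:dissipative}: compute $\bB{w}^{\rm T}\mbf{S}_{\rm NM}(\bB{u})$ directly with the entropy variables of Corollary~\ref{cor:entropy:variables}, and show that it splits into a bottom-friction part and a bulk-friction part, each nonpositive. Since the first component of $\mbf{S}_{\rm NM}$ vanishes, $w_1$ plays no role. Only $w_2=u_m$ and $w_{i+2}=\alpha_i/(2i+1)$ enter.

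For the bottom-friction part, contract $(0,1,\bB{r}^{\rm T})^{\rm T}$ with $\bB{w}$. This gives $u_m+\sum_{i=1}^N (2i+1)\,\alpha_i/(2i+1) = u_m+\sum_{i=1}^N\alpha_i = u_b$. The factor $r_i=2i+1$ cancels exactly against the scaling of $w_{i+2}$, which is the same mechanism as in the Newtonian slip case. The contribution is therefore
\begin{equation*}
-\frac{\rho g n^2}{h^{1/3}}\,|u_b|\,u_b^2 = -\frac{\rho g n^2}{h^{1/3}}\,|u_b|^3 \leq 0,
\end{equation*}
using $h>0$, $\rho,g,n>0$.

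The bulk-friction part is identical to the second term of $\mbf{S}_{\rm Ns}$, so the computation in Lemma~\ref{lem:PNs:dissipative} applies verbatim. By the definition \eqref{eq:CMatrix} of $\bB{C}$, we have $\sum_{i}\frac{\alpha_i}{2i+1}(\bB{C}\bB{\alpha})_i=\sum_{i,j}\big(\int_0^1\phi_i'\phi_j'\,{\rm d}\zeta\big)\alpha_i\alpha_j$. This equals $\int_0^1\big(\sum_i\phi_i'\alpha_i\big)^2{\rm d}\zeta\ge 0$, and multiplying by $-\nu/h<0$ gives a nonpositive term. Adding the two parts yields $\bB{w}^{\rm T}\mbf{S}_{\rm NM}(\bB{u})\le 0$.

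There is no real obstacle. The only points needing care are the exact cancellation of $r_i$ with the denominators of $w_{i+2}$, and noting that $|u_b|u_b\cdot u_b=|u_b|^3$ keeps a definite sign regardless of the sign of $u_b$. That sign property is what distinguishes the Manning law from a general nonlinear friction law.
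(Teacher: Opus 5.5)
Your proposal is correct and follows the paper's proof: you contract $\bB{w}$ with $\mbf{S}_{\rm NM}$, use the cancellation of $r_i=2i+1$ against $w_{i+2}=\alpha_i/(2i+1)$ to recover $u_b$, and reuse the bulk-friction computation from Lemma~\ref{lem:PNs:dissipative}. This gives $\bB{w}^{\rm T}\mbf{S}_{\rm NM} = -\frac{\rho g n^2}{h^{1/3}}|u_b|\,u_b^2 - \frac{\nu}{h}\int_0^1\big(\sum_{i=1}^N\phi_i'(\zeta)\alpha_i\big)^2\,{\rm d}\zeta \le 0$, exactly as in the paper.
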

\begin{proof}
Multiplying the entropy vector $\bB{w}$ to the friction term $\mbf{S}_{\rm NM}$, we perform the same calculations done in the proof of Lemma~\ref{lem:PNs:dissipative} to obtain
\begin{equation}
	\begin{aligned}
	\bB{w}^{\rm T}\mbf{S}_{\rm NM} &= 
	-\frac{\rho g n^2}{h^{1/3}} \left|u_b\right| u_b^2
	- \frac{\nu}{h} \int_0^1 \bigg(\sum_{i=1}^N\phi_i'(\zeta) \alpha_i\bigg)^2 {\rm d}\zeta \leq 0,
	\end{aligned}
\end{equation}
which completes the proof.
\end{proof}

\section{Numerical method}\label{sec:numerical:scheme}

Before delving further into the numerical approximation of the SWME, we augment the system with the additional trivial evolution equation $b_t = 0$. This reformulation incorporates the bottom topography source term into the nonconservative product, which simplifies the construction of well-balanced schemes \cite{munoz2011convergence}. 
To this end, we augment both vectors of conservative variables $\bB{u} = (h,hu_m,h\alpha_1,h\alpha_2,\dots,h\alpha_N,b)^{\rm T}\in \mathbb{R}^{N+3}$ as well as entropy variables $\bB{w} = (w_1,w_2,\dots,w_{N+2},gh)^{\rm T} \in \mathbb{R}^{N+3}$ to account for the evolution equation of the bottom.
The resulting augmented and non-homogeneous system~\eqref{sys:SWME} is then written in vector notation as follows
\begin{align} \label{sys:SWME:vectorial}
 \partial_t \bB{u} + \partial_x \mbf{f}(\bB{u}) + \mbf{B}(\bB{u}) \partial_x \bB{u} = \mbf{S}(\bB{u}),
\end{align}
where $\mbf{S} = \mbf{S}(\bB{u})$ denotes a source term, which typically includes the friction related contributions, and
\begin{align*}
 \mbf{f}(\bB{u}) = \begin{pmatrix}
                    hu_m\\
                    hu_m^2 + \Psi\\
                    2h u_m\bB{\alpha} + \bB{\mathfrak{A}}\\
                    0
                   \end{pmatrix},\qquad
\mbf{B}(\bB{u}) =
\underbrace{\begin{pmatrix}
    0 & 0 & \bB{0}^{\rm T} & 0\\
       gh & 0 & \bB{0}^{\rm T} & gh\\
        \bB{0} & \bB{0} & -u_m\mbf{I} & \bB{0} \\
 0 & 0 & \bB{0}^{\rm T} & 0
  \end{pmatrix}}_{\mathlarger{\mbf{B}^{\tt LM}(\bB{u})}}
  +
\underbrace{\begin{pmatrix}
 0 & 0 & \bB{0}^{\rm T} & 0\\
 0 & 0 & \bB{0}^{\rm T} & 0\\
 \bB{0} & \bB{0} & \widehat{\mbf{B}}(\bB{\alpha}) & \bB{0} \\
 0 & 0 & \bB{0}^{\rm T} & 0
\end{pmatrix}}_{\mathlarger{\mbf{\bar{B}}^{\tt LM}(\bB{u})}},
\end{align*}
with $\bB{0}$ the (column) vector of zeros in $\mathbb{R}^N$, $\mbf{I}$ the identity matrix in $\mathbb{R}^{N\times N}$, and $\widehat{\mbf{B}}(\bB{\alpha}) \in \mathbb{R}^{N\times N}$ is the matrix whose elements are defined by
\begin{align*}
  \widehat{B}_{ij}(\bB{\alpha}) = B_{ij1}\alpha_1+B_{ij2}\alpha_2+\cdots + B_{ijN}\alpha_N.
\end{align*}
The first summand in the definition of $\mbf{B}$ corresponds to the nonconservative coefficient matrix related to the linearized moment model SWLME, and is denoted by $\mbf{B}^{\tt LM}$ while the second summand that involves the coefficients $B_{ijk}$ is denoted by $\mbf{\bar{B}}^{\tt LM}$. 

To evaluate the moment coefficients $A_{ijk}$ and $B_{ijk}$ in \eqref{eq:Aijk_Bijk}, and friction coefficients ${C}_{ij}$ in \eqref{eq:CMatrix}, we construct the shifted Legendre polynomials \eqref{eq:shifted_legendre_polynomial} from a recurrence relation and apply Legendre-Gauss quadrature for accurate integration, see \ref{sec:computeTensors} for details.
Furthermore, we introduce the standard jump and average operators, denoted by $\jump{\cdot}$ and $\avg{\cdot}$, respectively, which for a given function $\varphi=\varphi(x)$ are defined by
\begin{align*}
 \jump{\varphi} := \jump{\varphi} (x) =  \varphi^+ - \varphi^-,\qquad  \avg{\varphi}:=\avg{\varphi}(x) = \tfrac{1}{2}(\varphi^+ + \varphi^-),
\end{align*}
where $\varphi^{+} = \varphi(x^+)$ and $\varphi^{-} = \varphi(x^-)$ for all $x\in \Omega$.

In the following sections, we first introduce an entropy conservative numerical scheme that recovers an integral version of the entropy conservation law \eqref{eq:total:energy:conservation} in the semi-discrete case.
This is achieved by formulating a numerical scheme that satisfies a summation-by-parts (SBP) property and the derivation of entropy conservative numerical fluxes to recover a semi-discrete version of the relation
\begin{equation*}
	\int_{\Omega}
	\bB{w}^{\rm T}\big(\partial_t\bB{u} + \partial_x \mbf{f}(\bB{u}) + \mbf{B}(\bB{u}) \partial_x \bB{u}\big)\, {\rm d}x 
	= \int_{\Omega}  \big(\partial_t \mathbb{E}(\bB{u}) + \partial_x \mathbb{F}(\bB{u})\big)\,{\rm d}x = 0.
\end{equation*}
This entropy conservative scheme acts as a baseline that is further extended to one that is entropy stable, by adding suitable numerical dissipation at interfaces, in the sense that the numerical method satisfies the entropy inequality
\begin{equation}\label{eq:entropy_inequality}
	\int_{\Omega} \big(\partial_t \mathbb{E}(\bB{u}) + \partial_x \mathbb{F}(\bB{u}) \big)\,\text{d}x \leq 0,
\end{equation}
which accounts for entropy dissipation at discontinuities. 


\subsection{Entropy conservative discontinuous Galerkin scheme} \label{sec:DG:description} 

For the spatial approximation, we formulate a collocated nodal discontinuous Galerkin spectral element method (DGSEM) on Legendre–Gauss–Lobatto (LGL) nodes \cite{Kopriva2009,Hesthaven2007}. 
To account for the nonconservative terms of the system, we formulate a path-conservative method that is written in terms of fluctuations as  established in the work of Par{\'e}s~\cite{Pares2006} for FV methods.
Entropy stable methods in fluctuation form have first been developed in \cite{castro2013entropy} in the context of FV schemes, by incorporating the ideas of Tadmor~\cite{Tadmor1987}. 
Further extensions to ES-DGSEM that combine a volume integral in flux differencing form with EC volume fluctuations were proposed in \cite{ersing2026new, Renac2019, coquel2021entropy, waruszewski2022entropy}, where in the following work we consider the formulation from \cite{ersing2026new}.

To apply this method, we consider a uniform discretization of the domain $\Omega$ into $k=1,...,K$ non-overlapping elements of size $\Delta x_k$.
Each element is mapped onto the reference domain $E = \{\xi \in \mathbb{R} \; : \; |\xi| \leq 1 \}$, where the solution is approximated in the space of polynomials up to degree $P$
\begin{equation*}\label{eq:polynomial_approximation}
	\bB{u} \approx \mbf{U}^k(\xi) = \sum_{j=0}^P \mbf{U}^k_j l_j(\xi),
\end{equation*}
spanned by nodal Lagrange basis functions $\{l_j(\xi)\}_{j=0}^P$ with interpolation nodes located at LGL points and test functions are taken from the basis.  
We further introduce the discrete derivative operator $\mathcal{D} \in \mathbb{R}^{(P+1) \times (P+1)}$, that is obtained from the derivatives of the Lagrange basis functions, and approximate integrals with LGL quadrature formulas, with corresponding LGL quadrature weights $\{\omega_j\}_{j=0}^P$. This choice of discrete operators collocates the interpolation and quadrature nodes and ensures that the method is endowed with a diagonal norm summation-by-parts property \cite{gassner2013skew}, which is key to ensure entropy stability.
The semi-discrete nonconservative DGSEM in flux-differencing form for the degree of freedom $i = 0,1,\dots,P$ is then given by \cite{ersing2026new}
\begin{equation}\label{eq:dgsem}
\omega_i \frac{\Delta x_k}{2} \partial_t \mbf{U}_i^k + \omega_i \sum\limits_{m=0}^P 2\mathcal{D}_{im} \mbf{D}^{-}(\mbf{U}_i^k, \mbf{U}_m^k) + \delta_{i0}\mbf{D}^{+}(\mbf{U}_P^{k-1}, \mbf{U}_0^k) + \delta_{iP} \mbf{D}^{-}(\mbf{U}_P^k, \mbf{U}_0^{k+1}) = \mbf{0},
\end{equation}
where $\delta_{i0}$ and $\delta_{iP}$ denote the Kronecker delta for the indices $0$ and $P$, respectively, and $\mbf{D}^{\pm}$ are fluctuations in the context of path-conservative schemes as introduced in \cite{Pares2006}, satisfying the consistency condition $\mbf{D}^{\pm}(\bB{u},\bB{u}) = \bB{0}$ and a path-conservation condition
\begin{equation}\label{eq:path_conservation_cond}
	\mbf{D}^{-}(\bB{u}^-, \bB{u}^+) + \mbf{D}^+(\bB{u}^-, \bB{u}^+) = \jump{\mbf{f}} + \int_0^1 \mbf{B}(\Phi(s)) \partial_s \Phi(s) \, \text{d}s,
\end{equation}
where $\Phi(s) \coloneqq \Phi(s; \bB{u}^-, \bB{u}^+)$ is a parametrization of the path connecting the states $\bB{u}^-$ and $\bB{u}^+$. In the present work, we consider fluctuations that are derived by taking the linear path $\Phi(s) = \bB{u}^- + s \jump{\bB{u}}$ and approximating the path integral in \eqref{eq:path_conservation_cond} with the trapezoidal rule to obtain
\begin{equation}\label{eq:fluctuation_definition}
	\mbf{D}^{\pm}(\bB{u}^-, \bB{u}^+) = \frac{1}{2} \mbf{B}(\bB{u}^{\pm})\jump{\bB{u}} \pm \left( \mbf{f}(\bB{u}^{\pm}) - \mbf{f}^*(\bB{u}^-, \bB{u}^+)\right),
\end{equation}
where $\mbf{f}^*$ denotes a numerical flux function that is consistent with the physical flux $\mbf{f}^*(\bB{u}, \bB{u}) = \mbf{f}(\bB{u})$.
As shown in \cite[Lemma 2]{ersing2026new} we can construct an entropy conservative (EC) scheme from \eqref{eq:dgsem}, i.e., a method that recovers a semi-discrete version of the entropy conservation law \eqref{eq:total:energy:conservation}, if the fluctuations \eqref{eq:fluctuation_definition} are constructed from an EC numerical flux $\mbf{f}^{\tt EC}:=\mbf{f}^{\tt EC}(\bB{u}^-,\bB{u}^+)$ that satisfies the entropy conservation condition
\begin{equation}
	\avg{\bB{w}^{\rm T}\mbf{D}} = \jump{\mathbb{F}} \quad \Leftrightarrow \quad
	\jump{\bB{w}}^{\rm T} \mbf{f}^{\tt EC} - \avg{\bB{w}^{\rm T} \mbf{B}} \jump{\bB{u}}  = \jump{\bB{w}^{\rm T}\mbf{f} - \mathbb{F}},
	\label{eq:entropy_conservation_condition}
\end{equation}
where the term on the right-hand side of \eqref{eq:entropy_conservation_condition} is the entropy potential.
Accordingly, we define the EC fluctuations
\begin{equation}\label{eq:ec_fluctuation_definition}
	\mbf{D}_{\tt EC}^{\pm}(\bB{u}^-, \bB{u}^+) = \frac{1}{2} \mbf{B}(\bB{u}^\pm) \jump{\bB{u}} \pm \left(\mbf{f}(\bB{u}^{\pm}) - \mbf{f}^{\tt EC}(\bB{u}^-, \bB{u}^+)\right).
\end{equation}

In the following section, we demonstrate how to construct such an EC numerical flux $\mbf{f}^{\tt EC}$ for the SWME and SWLME. In this procedure we will exploit the hierarchical character of the SWME to decompose the system into components that satisfy distinct entropy conservation conditions. 

\begin{remark}
    For all non-conservative systems the solution depends on the chosen path \cite{Pares2006}. In the case of the SWME, however, similar to works on moment models for rarefied gases where the dependence on the choice of the path was investigated in great detail, see e.g. \cite{koellermeier_numerical_2017}, the linear path gives accurate results. There is little to no practical dependence on the path. This can be attributed to the occurrence of the non-conservative products mainly in the higher order moment equations, while the mass and momentum equations do not contain non-conservative terms. 
\end{remark}

\subsection{Entropy conservative flux for the SWME} \label{sec:EC-Moment-Fluxes}

In order to construct an EC flux that satisfies \eqref{eq:entropy_conservation_condition}, we first observe that both the SWE and SWLME are part of the algebraic definition of the SWME. The conservative fluxes of the SWLME can be constructed hierarchically by defining the respective fluxes as follows
\begin{align}
\mbf{f}^{\tt SWE}(\bB{u}) \coloneqq \begin{pmatrix}
                    hu_m\\
                    hu_m^2\\
                   \bB{0}\\
                    0
                   \end{pmatrix},
                   \qquad
\mbf{\bar{f}}^{\tt SWE}(\bB{u}) \coloneqq \begin{pmatrix}
                    0\\
                    \Psi\\
                   2hu_m\bB{\alpha}\\
                    0
                   \end{pmatrix},
\qquad
\mbf{f}^{\tt LM}(\bB{u}) \coloneqq \mbf{f}^{\tt SWE}(\bB{u}) +  
\mbf{\bar{f}}^{\tt SWE}(\bB{u}).\qquad
\end{align}
In addition, it is straightforward to write the flux corresponding to the SWME in \eqref{sys:SWME:vectorial} in terms of $\mbf{f}^{\tt LM}$ with the equation $ \mbf{f}(\bB{u}) = \mbf{f}^{\tt LM} (\bB{u}) + \mbf{\bar{f}}^{\tt LM}(\bB{u})$,
where the additional flux contribution is defined by $\mbf{\bar{f}}^{\tt LM}(\bB{u}) \coloneqq (0,0,\bB{\mathfrak{A}}^{\rm T},0)^{\rm T}$.
In a similar fashion, the nonconservative term exhibits the same hierarchical structure
\begin{align} \label{eq:BSWE:BLM}
 \mbf{B}^{\tt SWE}(\bB{u}) \coloneqq
\begin{pmatrix}
    0 & 0 & \bB{0}^{\rm T} & 0\\
       gh & 0 & \bB{0}^{\rm T} & gh\\
        \bB{0} & \bB{0} & \bB{\Theta} & \bB{0} \\
 0 & 0 & \bB{0}^{\rm T} & 0
  \end{pmatrix},\qquad
  \mbf{\bar{B}}^{\tt SWE}(\bB{u}) \coloneqq 
  \begin{pmatrix}
    0 & 0 & \bB{0}^{\rm T} & 0\\
       0 & 0 & \bB{0}^{\rm T} & 0\\
        \bB{0} & \bB{0} & -u_m\mbf{I} & \bB{0} \\
 0 & 0 & \bB{0}^{\rm T} & 0
  \end{pmatrix}, \qquad
  \mbf{B}^{\tt LM}(\bB{u}) \coloneqq  \mbf{B}^{\tt SWE}(\bB{u}) + \mbf{\bar{B}}^{\tt SWE}(\bB{u}), \quad
\end{align}
where $\bB{\Theta}$ is the zero matrix in $\mathbb{R}^{N\times N}$. 
Then, the nonconservative term related to the SWME in \eqref{sys:SWME:vectorial} satisfies that $\mbf{B}(\bB{u}) = \mbf{B}^{\tt LM}(\bB{u}) + \mbf{\bar{B}}^{\tt LM}(\bB{u})$. The hierarchical property of the moment models allows us to compute the potentials as follows
\begin{align}\label{eq:potential:hierarchy}
\begin{aligned}
 \jump{\bB{w}^{\rm T}\mbf{f} - \mathbb{F}} & = \jump{\bB{w}^{\rm T}(\mbf{f}^{\tt LM} + \mbf{\bar{f}}^{\tt LM}) - (\mathbb{F}^{\tt LM} + \bar{\mathbb{F}}^{\tt LM})}\\
 & = \jump{\bB{w}^{\rm T}\mbf{f}^{\tt LM} - \mathbb{F}^{\tt LM}} +
     \jumpbig{\bB{w}^{\rm T}\mbf{\bar{f}}^{\tt LM} - \overline{\mathbb{F}}^{\tt LM}\,},
\end{aligned}
\end{align}
where $\mathbb{F}^{\tt LM}$ is the entropy flux of the SWLME defined as the entropy flux $\mathbb{F}$ in \eqref{eq:Total:Flux} without the terms related to $A_{ijk}$ and $B_{ijk}$, and the remaining flux is $\overline{\mathbb{F}}^{\tt LM} \coloneqq \mathbb{F} - \mathbb{F}^{\tt LM}$. Hence, we compute the entropy potentials
\begin{align}
&		\begin{aligned} \label{eq:potential:LM}
			\mathcal{P}^{\tt LM} = \bB{w}^{\rm T}\mbf{f}^{\tt LM} - \mathbb{F}^{\tt LM}
			&= \left(g(h+b) - \tfrac{1}{2}u_m^2 - \tfrac{1}{2} \Psi/h\right) hu_m+
			u_m\left(hu_m^2 + \Psi\right)
			+ 2u_m \Psi - \mathbb{F}^{\tt LM}
			\\&=
			g(h+b)hu_m + \frac{1}{2}hu_m^3 + \frac{5}{2}u_m\Psi - \mathbb{F}^{\tt LM} = u_m\Psi,
		\end{aligned}\\
&		\begin{aligned}\label{eq:potential:LM:bar}
			\overline{\mathcal{P}}^{\tt LM} = \bB{w}^{\rm T}\mbf{\bar{f}}^{\tt LM} - \overline{\mathbb{F}}^{\tt LM}
			& =  \sum_{i=1}^N\dfrac{\alpha_i \mathfrak{A}_i}{2i+1} - \overline{\mathbb{F}}^{\tt LM}
			 =  -\sum_{i, j, k=1}^N \dfrac{B_{i j k}}{2i+1} h\alpha_i \alpha_j \alpha_k.
		\end{aligned}
\end{align}
Finally, the entropy potential related to the SWME given in \eqref{eq:entropy_conservation_condition} is given by
\begin{align}
 \jump{\bB{w}^{\rm T}\mbf{f} - \mathbb{F}} = \jump{\mathcal{P}^{\tt LM}} + \jump{\overline{\mathcal{P}}^{\tt LM}} =  \jump{u_m \Psi} - \sum_{i, j, k=1}^N \dfrac{B_{i j k}}{2i+1} \jump{h\alpha_i \alpha_j \alpha_k}.
\end{align}
With the entropy potentials determined, we are equipped to derive the EC flux for the SWME system \eqref{sys:SWME}, i.e., a numerical flux that satisfies \eqref{eq:entropy_conservation_condition}. To this end, we will make use of the hierarchical property of the SWME system.
From \cite{Ersing2025}, we know that a suitable entropy conservative flux for the SWE is the following
\begin{align}
 \mbf{f}^{\tt EC, SWE}(\bB{u}^-,\bB{u}^+) =
		\begin{pmatrix}
			\avg{hu_m} \\
			\avg{hu_m}\avg{u_m} \\
			\bB{0}\\
			0
		\end{pmatrix},
\end{align}
which together with the nonconservative matrix $\mbf{B}^{\tt SWE}$ defined in \eqref{eq:BSWE:BLM}, and the flux $\mbf{f}^{\tt EC, SWE}$  satisfies the following equation:
	\begin{equation}
		\jump{\bB{w}}^{\rm T} \mbf{f}^{\tt EC,SWE} - \avg{\bB{w}^{\rm T} \mbf{B}^{\tt SWE}} \jump{\bB{u}} = -\frac{1}{2}\sum\limits_{j=1}^N \frac{1}{2j + 1}\jump{\alpha_j^2} \avg{hu_m}.
		\label{eq:entropy_conservation_condition_swe}
	\end{equation}
Next, we discretize the remaining fluxes $\mbf{\bar{f}}^{\tt SWE}$ and $\mbf{\bar{f}}^{\tt LM}$ by
\begin{equation}
 \mbf{\bar{f}}^{\tt EC, SWE}(\bB{u}^-,\bB{u}^+) \coloneqq
		\begin{pmatrix}
			0 \\
			\sum\limits_{j=1}^N \frac{\avg{h\alpha_j}\avg{\alpha_j}}{2j + 1} \\
			\avg{hu_m}\avg{\bB{\alpha}} + \avg{h\bB{\alpha}}\avg{u_m}\\
			0
		\end{pmatrix},\qquad
		\mbf{\bar{f}}^{\tt EC,LM}(\bB{u}^-,\bB{u}^+)\coloneqq
		\begin{pmatrix}
			0 \\
			0 \\
			\sum\limits_{j,k=1}^N \avg{h\alpha_j}\avg{\alpha_k}\mathbf{A}_{jk} \\
			0
		\end{pmatrix},
		\label{eq:swme_ec_fluxes}
	\end{equation}
where for all $i,j = 1,\dots,N$, the vector $\mbf{A}_{jk} = (A_{1jk}, A_{2jk},\dots,A_{Njk})^{\rm T}$ is in $\mathbb{R}^N$. We are now able to define our entropy conservative numerical flux for the SWME system \eqref{sys:SWME:vectorial}
\begin{align} \label{eq:f:EC}
 \mbf{f}^{\tt EC}(\bB{u}^-,\bB{u}^+) \coloneqq \mbf{f}^{\tt EC, SWE}(\bB{u}^-,\bB{u}^+) + \mbf{\bar{f}}^{\tt EC, SWE}(\bB{u}^-,\bB{u}^+) + \mbf{\bar{f}}^{\tt EC,LM}(\bB{u}^-,\bB{u}^+).
\end{align}
The following theorem shows that the numerical flux $\mbf{f}^{\tt EC}$ defined by Equation~\eqref{eq:f:EC} is entropy conservative, i.e., it satisfies the entropy condition \eqref{eq:entropy_conservation_condition}.
\begin{theorem}\label{thm:ec_flux}
 The numerical flux $\mbf{f}^{\tt EC}$ defined in \eqref{eq:f:EC}, which approximates the flux $\mbf{f}$ in system~\eqref{sys:SWME:vectorial} fulfills the entropy condition \eqref{eq:entropy_conservation_condition} with the entropy flux $\mathbb{F}$ defined in \eqref{eq:Total:Flux}.
\end{theorem}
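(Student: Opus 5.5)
The plan is to verify \eqref{eq:entropy_conservation_condition} by splitting it along the hierarchy \eqref{eq:f:EC}, $\mbf{B}=\mbf{B}^{\tt SWE}+\mbf{\bar B}^{\tt SWE}+\mbf{\bar B}^{\tt LM}$. By linearity, the left-hand side of \eqref{eq:entropy_conservation_condition} becomes a sum of three contributions. The target right-hand side is
\begin{equation*}
\jump{u_m\Psi} - \sum_{i,j,k=1}^N \widetilde B_{ijk}\jump{h\alpha_i\alpha_j\alpha_k}.
\end{equation*}
I will show that the first two contributions together produce $\jump{u_m\Psi}$, and that the third produces the $B$-term.

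The computations use the entropy variables of Corollary~\ref{cor:entropy:variables}, namely $w_2=u_m$ and $w_{i+2}=\alpha_i/(2i+1)$, together with two discrete rules:
\begin{equation*}
\jump{ab}=\avg{a}\jump{b}+\jump{a}\avg{b},\qquad \avg{ab}=\avg{a}\avg{b}+\tfrac14\jump{a}\jump{b}.
\end{equation*}

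\emph{Step 1 (SWE part).} This is given by \eqref{eq:entropy_conservation_condition_swe}. It leaves the residual $-\tfrac12\sum_j \jump{\alpha_j^2}\avg{hu_m}/(2j+1)$.

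\emph{Step 2 ($\mbf{\bar f}^{\tt EC,SWE}$, $\mbf{\bar B}^{\tt SWE}$ part).} Since $\mbf{\bar B}^{\tt SWE}$ has the block $-u_m\mbf{I}$, the nonconservative contribution is $+\sum_i \avg{u_m\alpha_i}\jump{h\alpha_i}/(2i+1)$. The flux contributes
\begin{equation*}
\sum_j \jump{u_m}\avg{h\alpha_j}\avg{\alpha_j}/(2j+1)+\sum_i \jump{\alpha_i}\big(\avg{hu_m}\avg{\alpha_i}+\avg{h\alpha_i}\avg{u_m}\big)/(2i+1).
\end{equation*}
Because $\jump{\alpha_i}\avg{\alpha_i}=\tfrac12\jump{\alpha_i^2}$, the $\avg{hu_m}$ terms cancel the residual from Step 1 exactly. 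For each $i$, the remaining three terms are
\begin{equation*}
\avg{u_m\alpha_i}\jump{h\alpha_i}+\avg{h\alpha_i}\big(\avg{u_m}\jump{\alpha_i}+\jump{u_m}\avg{\alpha_i}\big)=\avg{u_m\alpha_i}\jump{h\alpha_i}+\avg{h\alpha_i}\jump{u_m\alpha_i}=\jump{u_m\alpha_i\,h\alpha_i}.
\end{equation*}
Dividing by $2i+1$ and summing over $i$ gives $\jump{u_m\Psi}=\jump{\mathcal P^{\tt LM}}$.

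\emph{Step 3 ($\mbf{\bar f}^{\tt EC,LM}$, $\mbf{\bar B}^{\tt LM}$ part; the main obstacle).} Write $\widetilde A_{ijk}=\int_0^1\phi_i\phi_j\phi_k\,{\rm d}\zeta$, which is fully symmetric in $i,j,k$. The contribution to check is
\begin{equation*}
\sum_{i,j,k}\widetilde A_{ijk}\jump{\alpha_i}\avg{h\alpha_j}\avg{\alpha_k}-\sum_{i,j,k}\widetilde B_{ijk}\avg{\alpha_i\alpha_k}\jump{h\alpha_j}.
\end{equation*}
I will show it equals $-\sum\widetilde B_{ijk}\jump{h\alpha_i\alpha_j\alpha_k}$. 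To do so, I expand the triple jump with the product rules, splitting $h\alpha_i\alpha_j\alpha_k$ as $(h\alpha_j)(\alpha_i\alpha_k)$:
\begin{equation*}
\jump{h\alpha_i\alpha_j\alpha_k}=\avg{\alpha_i\alpha_k}\jump{h\alpha_j}+\avg{h\alpha_j}\big(\avg{\alpha_i}\jump{\alpha_k}+\jump{\alpha_i}\avg{\alpha_k}\big).
\end{equation*}
The $\avg{\alpha_i\alpha_k}\jump{h\alpha_j}$ terms then cancel directly. What remains is to show
\begin{equation*}
\sum_{i,j,k}\big(\widetilde A_{ijk}+\widetilde B_{ijk}\big)\jump{\alpha_i}\avg{h\alpha_j}\avg{\alpha_k}+\sum_{i,j,k}\widetilde B_{ijk}\avg{\alpha_i}\avg{h\alpha_j}\jump{\alpha_k}=0.
\end{equation*}
In the second sum I relabel $i\leftrightarrow k$, so both sums carry $\jump{\alpha_i}\avg{h\alpha_j}\avg{\alpha_k}$. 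The combined coefficient is $\widetilde A_{ijk}+\widetilde B_{ijk}+\widetilde B_{kji}$. Using the symmetry $\widetilde A_{ijk}=\widetilde A_{kji}$, this equals $\widetilde B_{ijk}+\widetilde A_{kji}+\widetilde B_{kji}$, which vanishes by \eqref{eq:condition} from Lemma~\ref{lem:Q:conservative}.

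Step 3 is where I expect the difficulty. The discrete product expansion of the triple product is not unique, and one must choose the grouping $(h\alpha_j)(\alpha_i\alpha_k)$ that matches both the averaged structure of $\mbf{\bar f}^{\tt EC,LM}$ and the $\avg{\alpha_i\alpha_k}$ produced by $\avg{\bB w^{\rm T}\mbf{\bar B}^{\tt LM}}$. With that grouping, the proof reduces to the index identity \eqref{eq:condition}.

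Summing the three parts, and using the hierarchical splitting \eqref{eq:potential:hierarchy} with \eqref{eq:potential:LM}--\eqref{eq:potential:LM:bar}, yields $\jump{\bB w^{\rm T}\mbf f-\mathbb F}$, as claimed in Theorem~\ref{thm:ec_flux}.
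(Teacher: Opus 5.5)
Your proposal is correct and follows essentially the same route as the paper. It uses the same hierarchical splitting, and it uses the SWE identity \eqref{eq:entropy_conservation_condition_swe} together with the discrete product rule to obtain $\jump{u_m\Psi}$. It then uses the coefficient identity \eqref{eq:condition} to reduce the $A$/$B$ part to $-\sum\widetilde B_{ijk}\jump{h\alpha_i\alpha_j\alpha_k}$. The paper substitutes \eqref{eq:condition} and collapses the product rule, while you expand the target jump, which is the same computation read backwards. You also state the symmetry $\widetilde A_{ijk}=\widetilde A_{kji}$ explicitly, which the paper uses tacitly when it replaces $\widetilde A_{ijk}$ by $-(\widetilde B_{ijk}+\widetilde B_{kji})$.
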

\begin{proof}
In the following, we make use of the hierarchical property of the SWME which is described in the previous section. Thanks to the linearity of the average operator $\avg{\cdot}$ and the definition of $\mbf{f}^{\tt EC}$ and $\mbf{B}$ as sums of fluxes and nonconservative matrices, respectively, the left-hand side of Equation~\eqref{eq:entropy_conservation_condition} can be written as
\begin{align}
\begin{aligned} \label{eq:leftside:EC:linearity}
 \jump{\bB{w}}^{\rm T} \mbf{f}^{\tt EC} - \avg{\bB{w}^{\rm T} \mbf{B}} \jump{\bB{u}} & =
 \jump{\bB{w}}^{\rm T} \big(\mbf{f}^{\tt EC, SWE} + \mbf{\bar{f}}^{\tt EC, SWE} + \mbf{\bar{f}}^{\tt EC, LM}\big)
 - \avg{\bB{w}^{\rm T} \big(\mbf{B}^{\tt SWE} + \mbf{\bar{B}}^{\tt SWE} + \mbf{\bar{B}}^{\tt LM}\big)} \jump{\bB{u}}\\
 & =
 \jump{\bB{w}}^{\rm T} \mbf{f}^{\tt EC, LM}
 - \avg{\bB{w}^{\rm T} \mbf{B}^{\tt LM}} \jump{\bB{u}} +
 \jump{\bB{w}}^{\rm T} \mbf{\bar{f}}^{\tt EC, LM}
 - \avg{\bB{w}^{\rm T} \mbf{\bar{B}}^{\tt LM}} \jump{\bB{u}}.
\end{aligned}
\end{align}

Then, we compute the terms related to the linearized moment model as follows:

\begin{align}
\begin{aligned}\label{eq:leftside:EC:P}
 \jump{\bB{w}}^{\rm T} \mbf{f}^{\tt EC, LM}
 - \avg{\bB{w}^{\rm T} \mbf{B}^{\tt LM}} \jump{\bB{u}}
			& = \sum_{j=1}^N \frac{1}{2j+1}\Big(
		 \jump{u_m}\avg{h\alpha_j}\avg{\alpha_j} + \jump{\alpha_j}\avg{h\alpha_j}\avg{u_m}
			+
			\avg{\alpha_j u_m}\jump{h\alpha_j}\Big)\\
			& = \sum\limits_{j=1}^N \frac{1}{2j+1}
			\jump{hu_m\alpha_j^2} =\jump{\mathcal{P}^{\tt LM}},
 \end{aligned}
\end{align}
where $\mathcal{P}^{\tt LM}$ is defined in \eqref{eq:potential:LM:bar}, and we have used the identity
\begin{align*}
\frac{1}{2}\jump{\alpha_j^2} \avg{hu_m} =  \jump{\alpha_j}\avg{hu_m}\avg{\alpha_j}.
\end{align*}
On the other hand, for the full moment model related terms in the right-hand side of \eqref{eq:leftside:EC:linearity}, we use the identity for the coefficients $A_{ijk}$ and $B_{ijk}$ given in \eqref{eq:condition}, to obtain
	\begin{equation}
		\begin{aligned} \label{eq:leftside:EC:barP}
 & \jump{\bB{w}}^{\rm T} \mbf{\bar{f}}^{\tt EC, LM}
 - \avg{\bB{w}^{\rm T} \mbf{\bar{B}}^{\tt LM}} \jump{\bB{u}}
			=
			\sum\limits_{i,j,k=1}^N \tilde{A}_{ijk} \jump{\alpha_i}\avg{h\alpha_j}\avg{\alpha_k} - \tilde{B}_{ijk}\avg{\alpha_i\alpha_k}\jump{h\alpha_j}\\
			&\qquad =
		    \sum\limits_{i,j,k=1}^N -\big(\tilde{B}_{ijk} + \tilde{B}_{kji} \big) \jump{\alpha_i}\avg{h\alpha_j}\avg{\alpha_k} - \tilde{B}_{ijk}\avg{\alpha_i\alpha_k}\jump{h\alpha_j}\\
		    &\qquad =
		    \sum\limits_{i,j,k=1}^N -\tilde{B}_{ijk}
		    \Big(
		    \jump{\alpha_i}\avg{h\alpha_j}\avg{\alpha_k} +
		    \jump{\alpha_k}\avg{h\alpha_j}\avg{\alpha_i} +
		    \avg{\alpha_i\alpha_k}\jump{h\alpha_j}
		    \Big)\\
		    &\qquad =
		    \sum\limits_{i,j,k=1}^N -\tilde{B}_{ijk}\jump{h\alpha_i\alpha_j\alpha_k} = \jumpbig{\,\overline{\mathcal{P}}^{\tt LM}},
		\end{aligned}
	\end{equation}
where $\overline{\mathcal{P}}^{\tt LM}$ is defined in \eqref{eq:potential:LM:bar}. Thus, replacing \eqref{eq:leftside:EC:P} and \eqref{eq:leftside:EC:barP} into \eqref{eq:leftside:EC:linearity} and using Equation~\eqref{eq:potential:hierarchy}, we conclude that
\begin{align*}
 \jump{\bB{w}}^{\rm T} \mbf{f}^{\tt EC} - \avg{\bB{w}^{\rm T} \mbf{B}} \jump{\bB{u}}  = \jump{\mathcal{P}^{\tt LM}} + \jumpbig{\,\overline{\mathcal{P}}^{\tt LM}} = \jump{\bB{w}^{\rm T}\mbf{f} - \mathbb{F}},
\end{align*}
which concludes the proof.
\end{proof}

\subsection{Entropy stable discontinuous Galerkin scheme}\label{sec:es_dgsem}

The result from Theorem~\ref{thm:ec_flux} shows that we can construct an EC scheme if we introduce EC fluctuations \eqref{eq:ec_fluctuation_definition} built from the EC flux \eqref{eq:f:EC} for both volume and surface contributions of the DGSEM \eqref{eq:dgsem}.
While the EC formulation is valid for smooth solutions, for discontinuous solutions entropy should be dissipated.
So, instead, we require an entropy stable (ES) formulation that recovers the entropy inequality \eqref{eq:entropy_inequality} discretely. 
To this end, we extend the EC fluctuation \eqref{eq:ec_fluctuation_definition} to one that is ES by adding a modified version of the Rusanov dissipation, that is formulated in terms of entropy variables in \cite{fjordholm2012energy, ranocha2017shallow}
\begin{equation}\label{eq:es_fluctuation_definition}
	\mbf{D}^{\pm}_{\tt ES} \,=\, \mbf{D}^{\pm}_{\tt EC} \pm \tfrac{1}{2}|\lambda|_{\max}\mbf{Q}_{\tt ES}\jump{\bB{w}} \,=\, \mbf{D}^{\pm}_{\tt EC} \pm \tfrac{1}{2}|\lambda|_{\max} 
    \begin{pmatrix} 
       \smash{\bar{\mbf{H}}} & 0 \\ 
       0 & 0 
    \end{pmatrix}\jump{\bB{w}},
\end{equation}
where $|\lambda|_{\max}$ denotes the largest absolute eigenvalue from \eqref{eq:eigenvalues:SWLME} and 
\begin{equation*}
	\bar{\mbf{H}} := \frac{1}{g}\left(\avg{\mbf{y}}\avg{\mbf{y}}^{\rm T} + \text{diag}(\avg{\mbf{z}}) \right)
    =
    \frac{1}{g}
    \begin{pmatrix}
    1 & \avg{v} & \avg{\alpha_1}  & \dots & \avg{\alpha_N}\\
    \avg{v} & \avg{v}^2 + g\avg{h} & \avg{v}\avg{\alpha_1} & \dots & \avg{v}\avg{\alpha_N} \\
    \avg{\alpha_1} & \avg{\alpha_1}\avg{v} & \avg{\alpha_1}^2 + 3g\avg{h} & \dots & \avg{\alpha_1}\avg{\alpha_N} \\
    \vdots & \vdots & \vdots  & \ddots & \vdots              \\
    \avg{\alpha_N} & \avg{\alpha_N}\avg{v}  & \avg{\alpha_N}\avg{\alpha_1}        & \dots & \avg{\alpha_N}^2 + (2N+1)g\avg{h}
    \end{pmatrix},
\end{equation*}
with vectors $\mbf{y} = (1, u_m, \bB{\alpha}^{\rm T})^{\rm T}$ and $\mbf{z} = (0, gh, 3gh, \dots, (2N+1)gh)^{\rm T}$ is the inverse of the entropy Hessian \eqref{eq:entropy:hessian}, evaluated at some intermediate state $\bar{\bB{u}} \in [\bB{u}^-, \bB{u}^+]$, that for  constant bottom topography satisfies the relation $\bar{\mbf{H}}\jump{(w_1, ..., w_{N+2})^{\rm T}} = \jump{(u_1, ..., u_{N+2})^{\rm T}}$.
\begin{remark}
	While the standard Rusanov dissipation is shown to be entropy dissipative, e.g., in \cite{ersing2026new, ranocha2018comparison}, these results require a spatially convex entropy function which does not apply in the case of non-constant  bottom topography.
\end{remark}

Assuming $h>0$, Lemma~\ref{lemma:spd:hessian} implies that $\bar{\mbf{H}}$ is symmetric positive definite and therefore $\frac{1}{2}|\lambda|_{\max}\jump{\bB{w}}^{\rm T}\mbf{Q}_{\tt ES}\jump{\bB{w}}$ is non-negative.
Then, from \cite[Theorem~4]{ersing2026new} the scheme
\begin{equation}\label{eq:es_dgsem}
\begin{aligned}
	&\omega_i \frac{\Delta x_k}{2} \partial_t \mbf{U}_i^k + \omega_i \sum\limits_{m=0}^P 2\mathcal{D}_{im} \mbf{D}_{\tt EC}^{-}\big(\mbf{U}_i^k, \mbf{U}_m^k\big) + \, 
    \delta_{i0}\mbf{D}_{\tt ES}^{+}\big(\mbf{U}_P^{k-1}, \mbf{U}_0^k\big) + 
    \delta_{iP}\mbf{D}_{\tt ES}^{-}\big(\mbf{U}_P^k, \mbf{U}_0^{k+1}\big) = \mbf{0},
\end{aligned}
\end{equation}
for the degree of freedom $i = 0,1,\dots,P$, with ES surface fluctuations \eqref{eq:es_fluctuation_definition} and EC volume fluctuations \eqref{eq:ec_fluctuation_definition} is ES, i.e., it recovers a semi-discrete integral version of the entropy inequality \eqref{eq:entropy_inequality}.

\subsection{Well-balanced approximation}

Another important continuous property of both the SMWE \eqref{sys:SWME} and SWLME \eqref{sys:SWLME} is the existence of steady states, where the solution remains constant in time.
For both kinds of models, several distinct steady state configurations are known and different approaches such as the global flux method \cite{Ciallela2026}, relaxation methods \cite{Caballero2025} or a formulation in terms of equilibrium variables \cite{FanArxiv2025} have demonstrated preservation for a general class of steady states.
Specifically, one important steady state solution that we consider in this work is the so-called lake-at-rest steady state given by
\begin{equation}\label{eq:lake_at_rest}
	\mathcal{U}_{\text{wb}} = \Big\{\bB{u} \; : \; h + b = H_0, \quad h u_m = 0, \quad h\bB{\alpha} =  \bB{0}, \quad h > 0 \Big\},
\end{equation}
where $H_0$ denotes some constant water level. 
Preservation of this steady state solution in the discrete case, called well-balancing, is an important aspect of any numerical method as many solutions represent small perturbations around this steady state and violations may introduce artificial waves on the order of the grid spacing \cite{chertock2018well}.
The following Lemma demonstrates that our ES numerical scheme \eqref{eq:es_dgsem} exactly satisfies such a well-balanced property at the discrete level.

\begin{lemma}\label{lemma:well_balancedness}
	The ES scheme \eqref{eq:es_dgsem} with fluctuations \eqref{eq:es_fluctuation_definition} and \eqref{eq:ec_fluctuation_definition} constructed from the EC flux \eqref{eq:f:EC} is well-balanced for the lake-at-rest steady state, i.e., it exactly preserves solutions satisfying the lake-at-rest conditions \eqref{eq:lake_at_rest}.
\end{lemma}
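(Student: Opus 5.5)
We show that for any nodal state with $\mbf{U}_i^k\in\mathcal{U}_{\rm wb}$ for all $i,k$, every fluctuation in \eqref{eq:es_dgsem} vanishes, so $\partial_t\mbf{U}_i^k=\mbf{0}$. Take two lake-at-rest states $\bB{u}^-,\bB{u}^+$. Since $h^\pm>0$, we have $u_m^\pm=0$ and $\bB{\alpha}^\pm=\bB{0}$. Also $h^++b^+=h^-+b^-=H_0$, so $\jump{h}=-\jump{b}$, while $\jump{hu_m}=0$ and $\jump{h\bB{\alpha}}=\bB{0}$.

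\textbf{EC fluctuations.} First evaluate the EC flux \eqref{eq:f:EC} at these states. Every entry of $\mbf{\bar{f}}^{\tt EC,SWE}$ and $\mbf{\bar{f}}^{\tt EC,LM}$ carries a factor $\avg{\bB{\alpha}}$, $\avg{h\bB{\alpha}}$, $\avg{hu_m}$ or $\avg{u_m}$, so these fluxes vanish. The SWE part reduces to $(0,0,\bB{0}^{\rm T},0)^{\rm T}$. The physical flux satisfies $\mbf{f}(\bB{u}^\pm)=\mbf{0}$ as well, since $\Psi=0$ and $\bB{\mathfrak{A}}=\bB{0}$ when $\bB{\alpha}=\bB{0}$. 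Hence $\mbf{f}(\bB{u}^\pm)-\mbf{f}^{\tt EC}=\mbf{0}$ and
\[
\mbf{D}^{\pm}_{\tt EC}=\tfrac12\mbf{B}(\bB{u}^\pm)\jump{\bB{u}}.
\]
Now apply $\mbf{B}$ row by row. The first and last rows are zero. Rows $3$ to $N+2$ are $(-u_m^\pm\mbf{I}+\widehat{\mbf{B}}(\bB{\alpha}^\pm))\jump{h\bB{\alpha}}$; this is zero because $u_m^\pm=0$, $\bB{\alpha}^\pm=\bB{0}$ and $\jump{h\bB{\alpha}}=\bB{0}$. The second row is $gh^\pm(\jump{h}+\jump{b})=0$. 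Therefore $\mbf{D}^\pm_{\tt EC}=\mbf{0}$. This covers the volume terms, with pairs $(\mbf{U}_i^k,\mbf{U}_m^k)$, and the EC part of the surface terms.

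\textbf{Dissipation term.} This is the step needing care, because the Rusanov-type term acts on $\jump{\bB{w}}$ rather than $\jump{\bB{u}}$. The block structure of $\mbf{Q}_{\tt ES}$ uses only the first $N+2$ entropy variables, and excludes the augmented $gh$ component, whose jump need not vanish. For these entries:
\begin{itemize}
\item $w_1=g(h+b)-\tfrac12u_m^2-\tfrac12\sum_i\alpha_i^2/(2i+1)=gH_0$ on both sides, so $\jump{w_1}=0$;
\item $\jump{w_2}=\jump{u_m}=0$;
\item $\jump{w_{i+2}}=\jump{\alpha_i}/(2i+1)=0$.
\end{itemize}
So $\mbf{Q}_{\tt ES}\jump{\bB{w}}=\mbf{0}$ regardless of the value of $|\lambda|_{\max}$ or $\bar{\mbf{H}}$, and $\mbf{D}^\pm_{\tt ES}=\mbf{D}^\pm_{\tt EC}=\mbf{0}$.

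\textbf{Conclusion.} Substituting into \eqref{eq:es_dgsem} gives $\omega_i\frac{\Delta x_k}{2}\partial_t\mbf{U}_i^k=\mbf{0}$ for all $i$ and $k$, so the lake-at-rest state is preserved exactly. The only delicate point is that the dissipation is built on $w_1$, which contains $b$ through $g(h+b)$, and excludes the bottom's entropy variable. This is precisely why the modified dissipation \eqref{eq:es_fluctuation_definition} is used in place of the standard Rusanov term in $\jump{\bB{u}}$, which would see $\jump{h}\neq0$.
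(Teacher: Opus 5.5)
Your proof is correct and takes essentially the same route as the paper. At lake-at-rest states you show that the EC flux and the physical flux both vanish, so $\mbf{D}^{\pm}_{\tt EC}=\tfrac12\mbf{B}(\bB{u}^\pm)\jump{\bB{u}}$ reduces to the single entry $\tfrac12 gh^\pm\jump{h+b}=0$. You then show the dissipation term vanishes because the first $N+2$ entropy variables are constant, while the zero block of $\mbf{Q}_{\tt ES}$ removes the augmented $gh$ entry. You also correctly assign the EC fluctuations to the volume terms and the ES fluctuations to the interfaces; the paper's written proof inadvertently swaps these two labels.
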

\begin{proof}
	We show that both volume and surface fluctuations in \eqref{eq:es_dgsem} vanish point-wise, when evaluated under lake-at-rest conditions \eqref{eq:lake_at_rest}.
	
	First consider the surface fluctuations \eqref{eq:ec_fluctuation_definition}. From the lake-at-rest \eqref{eq:lake_at_rest} most components vanish as momenta and moments are set to zero due to vanishing velocities.
    The remaining contributions vanish as the total height $h+b$ is constant
	\begin{equation*}
		\mbf{D}_{\tt EC}^{\pm}(\bB{u}^-, \bB{u}^+) = 
		\big(0, \tfrac{1}{2}gh^{\pm}\jump{h + b}, 0, \hdots, 0\big)^{\rm T}
		= \mbf{0}, \quad \text{for all }\bB{u}^-, \bB{u}^+ \in \mathcal{U}_{\text{wb}}.
	\end{equation*}	
	Then, to show that the volume fluctuations \eqref{eq:es_fluctuation_definition} vanish, we only have to show that the additional dissipation term vanishes
	\begin{equation*}
		\mbf{D}^{\pm}_{\tt ES}(\bB{u^-}, \bB{u}^+) = \mbf{D}_{\tt EC}^{\pm}(\bB{u}^-, \bB{u}^+) \pm \tfrac{1}{2}|\lambda|_{\max}\begin{pmatrix} 
       \smash{\bar{\mbf{H}}} & 0 \\ 
       0 & 0 
    \end{pmatrix}
    \jump{\bB{w}} = \mbf{0},  \quad \text{for all }\bB{u}^-, \bB{u}^+ \in \mathcal{U}_{\text{wb}},
	\end{equation*}
	which follows directly as the unaugmented entropy variables \eqref{eq:SWME:Entropy:Variables} assume a constant value for the lake-at-rest conditions \eqref{eq:lake_at_rest}, while the last entry is multiplied by zero. 
	Substituting both results into \eqref{eq:es_dgsem} shows that the time-derivative vanishes.
\end{proof}

\section{Numerical examples} \label{sec:numerical:examples}

In this section, we present a set of numerical examples to verify our theoretical findings and demonstrate the performance of the proposed ES numerical scheme from Section \ref{sec:numerical:scheme}. The numerical examples have been produced making use of the open-source framework Trixi.jl \cite{Ranocha2022,Schlottke2021} for the semi-discretization. For the time integration, we employ a low storage five-stage fourth-order Runge--Kutta scheme by Carpenter and Kennedy \cite{Carpenter1994} as implemented in DifferentialEquations.jl \cite{rackauckas2017differentialequations}. Unless the time step is specified, we use a CFL-based time step using the eigenvalues of the SWLME given by \eqref{eq:eigenvalues:SWLME}, which for the case of the SWME, only corresponds to an estimation.

To cover the cases in which the solution may feature shocks or high gradients, we supplement our ES scheme \eqref{eq:es_dgsem} in Section~\ref{sec:numerical:scheme} with the subcell shock capturing approach developed in \cite{Hennemann2021} using the indicator variable $u_m^3$, that was selected heuristically as it showed favourable shock-capturing properties in our numerical tests. The shock capturing method is active in all examples with the exception of the accuracy test in Example~\ref{ex:3}, as it may affect the spatial order of accuracy.

The necessary code and instructions to reproduce the numerical results in this section are provided in a reproducibility repository \cite{ersing2026swmeRepro}.

\subsection{Example 1: Smooth wave with friction effect}\label{ex:1}

With the purpose of comparing our numerical scheme with existing solvers for SWME, we consider the standard test example of a travelling water wave with periodic boundary conditions described in \cite[Section 5]{Kowalski2019}. In this example, the friction is modelled via the Newtonian slip law given by the function $\mbf{S}=\mbf{S}_{\rm Ns}$ \eqref{eq:friction:Nslip}, which corresponds to the source term in \eqref{sys:SWME:vectorial}. Furthermore, we set the slip length to $\lambda = 0.1$ and the friction constant to $\nu = 0.1$, and use $g=1$. The water body is initially described by a smooth bump given by the following height function
\begin{align*}
 h(x,0) = 1 + {\rm exp}\Big(3\cos\big(\pi (x + \tfrac{1}{2} )\big) - 4\Big)\qquad \text{for } x\in [-1,1],
\end{align*}
while the bathymetry function for the flat bottom corresponds to $b(x)=0$. The initial average velocity and moments, for the modelling polynomial order $N=2$, are given by
\begin{align*}
u_m(x,0)      = 0.25,\qquad 
\alpha_1(x,0) = 0 \quad \text{and}\quad 
\alpha_2(x,0) = -0.25\qquad \text{for } x\in[-1, 1].
\end{align*}
For the reference approximate solution, we use a finite volume polynomial viscosity based scheme (PVM) described in \cite{CareagaArxiv2025}, in which system \eqref{sys:SWME:vectorial} is written in a fully non-conservative formulation combining the Jacobian of $\mbf{f}$ and matrix $\mbf{B}$ into a single system matrix. In addition, the viscosity matrix used is given by the HLL method, the path integrals are computed with linear paths and third order quadrature rule, time approximation is via forward Euler, and the Fortran 90 routines used can be found in \cite{careaga_2026_18468724}.

\begin{figure}[t]
\centering
\includegraphics[width = 0.95\textwidth]{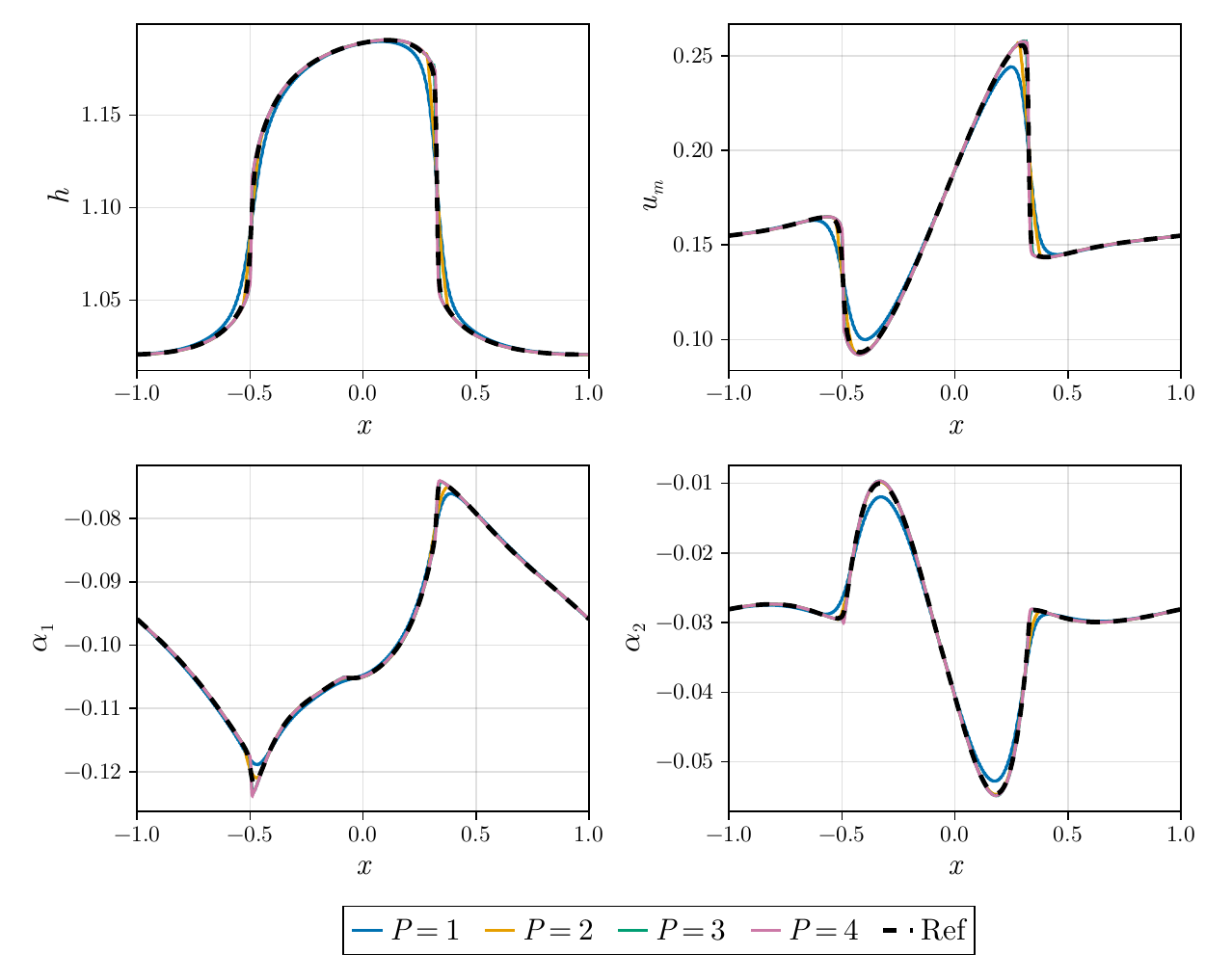}
 \caption{Example 1: Numerical solutions (in primitive variables) of the SWME \eqref{sys:SWME} at $t=2$ with $N = 2$ and source term given by \eqref{eq:friction:Nslip} with $\lambda = 0.1$ and $\nu = 0.1$, for different polynomial degrees $P$. In all simulations $K=256$ and $\text{CFL = 0.9}$. \label{fig:example1a}}
\end{figure}

In Figure~\ref{fig:example1a}, we present snapshots of the numerical solution at time $t=2$ computed with the DG entropy preserving numerical scheme from Section~\ref{sec:numerical:scheme} on a mesh with $K=256$ elements varying the polynomial degree $P$. The reference solution, which is approximated with $2500$ cells, corresponds to the black dashed line. In all components of the solution, the DG approximations tend to the reference solution as the polynomial degree $P$ increases. We observe that already for $P=4$, the numerical solution is comparable to the reference solution, even when the difference in the number of layers is large. 
Now, to visually compare our approximate solutions upon mesh refinement, we fix the polynomial degree to $P=2$ and compute the solutions for $K=128,256,512$. As before, all the components of the approximated solutions are compared to the reference curve in Figure~\ref{fig:example1b} at time $t=2$. As expected, the approximations converge to the reference curve, where the finer refinement for $K=512$ closely matches the reference solution. 

\begin{figure}[t]
\includegraphics[width = 0.95\textwidth]{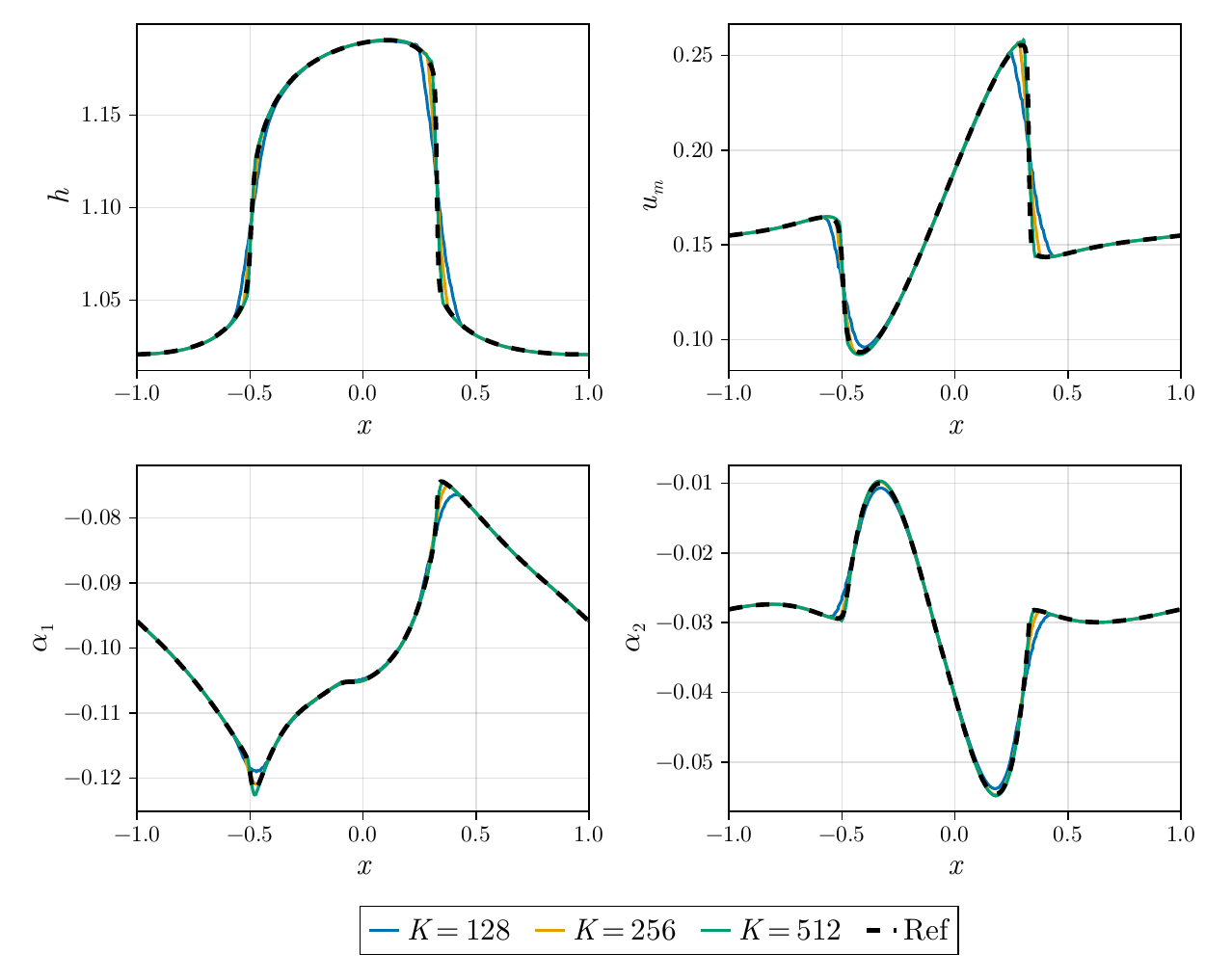}
\caption{Example 1: Numerical solutions (in primitive variables) of the SWME \eqref{sys:SWME} at $t=2$ with $N = 2$ and source term given by \eqref{eq:friction:Nslip} with $\lambda = 0.1$ and $\nu = 0.1$, for different numbers of elements $K$. In all simulations, the polynomial degree is $P = 2$ and $\text{CFL = 0.9}$. \label{fig:example1b}}
\end{figure}

\subsection{Example 2: Entropy dissipation of the friction terms}\label{ex:2}

We now study the discrete entropy dissipation due to the Newtonian slip friction $\mbf{S} = \mbf{S}_{\rm Ns}$ \eqref{eq:friction:Nslip} and Newtonian Manning law $\mbf{S} =\mbf{S}_{\rm NM}$ \eqref{eq:friction:NManning} described in Section~\ref{sec:Entropy:dissipation}. For this, we consider the same initial condition as in Example 1, with actual gravitational acceleration constant $g=9.81$, and for $\mbf{S}_{\rm NM}$ we use the Manning coefficient $n=0.0165$ and constant density $\rho = 1000$. Furthermore, we use here the non-homogeneous version of the linearized model SWLME defined in \eqref{sys:SWLME}. To measure the entropy dissipation in the entire domain $\Omega = [-1,1]$ at each time point, we define the following quantities 
\begin{align*}
\mathcal{D}_{\rm Ns}(t) = \frac{1}{|\Omega|} \int_{\Omega} \bB{w}^{\rm T}\mbf{S}_{\text{Ns}}(\bB{u})\,\text{d}x,
\quad 
\mathcal{D}_{\rm NM}(t) = \frac{1}{|\Omega|} \int_{\Omega} \bB{w}^{\rm T}\mbf{S}_{\text{NM}}(\bB{u})\,\text{d}x, 
\end{align*}
which are evaluated numerically using the polynomial approximation and quadrature formulas as described in Section~\ref{sec:numerical:scheme}.
In Figure~\ref{fig:example2}, we show the entropy dissipation at each time for the Newtonian slip $\mathcal{D}_{\rm Ns}$, and Newtonian Manning case $\mathcal{D}_{\rm NM}$ varying the friction parameter $\nu$. As shown in Section~\ref{sec:Entropy:dissipation}, in all cases and for both friction terms, the entropy dissipation remains negative and tends to zero as the time evolves. In both friction terms, the effect of larger values of $\nu$ is more pronounced at early times, where the entropy dissipation curves for $\nu=1$ are the ones of larger magnitude, with minima at $t=0$. The small oscillations in both cases can be explained as a consequence of the periodic boundary conditions.

\begin{figure}[t]
\centering
\includegraphics[width=0.9\textwidth]{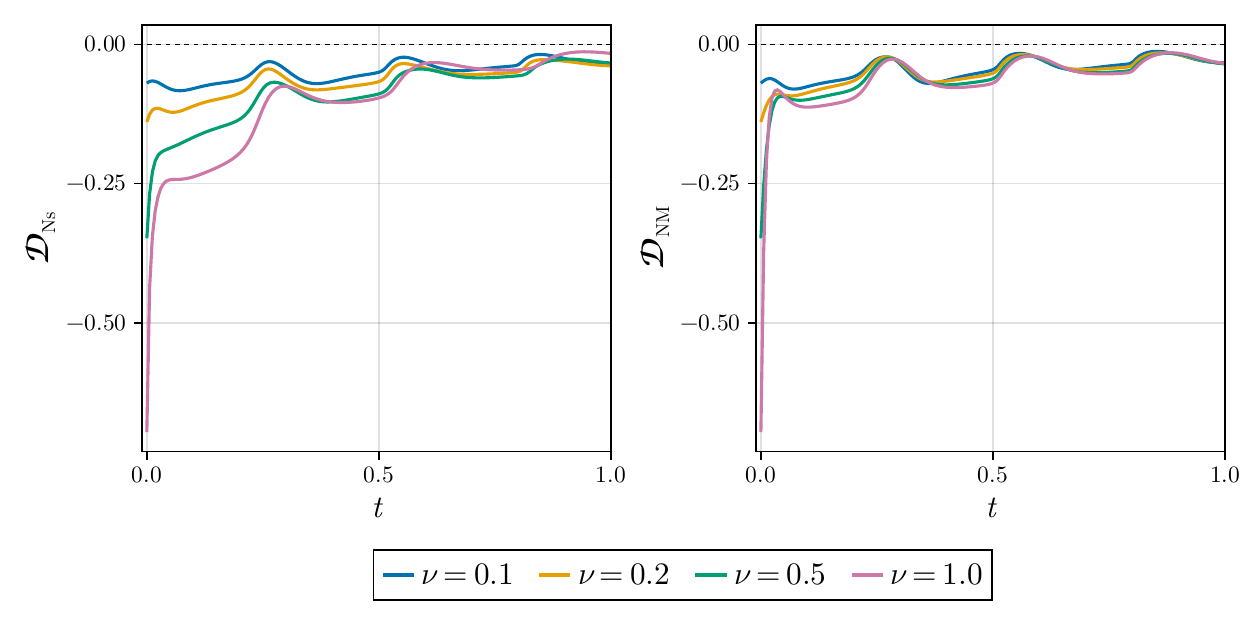} 
\caption{Example 2: Entropy dissipation for the Newtonian slip friction $\mbf{S}_{\rm Ns}$ \eqref{eq:friction:Nslip} and Newtonian Manning friction $\mbf{S}_{\rm NM}$ \eqref{eq:friction:NManning} with respect to the entropy variables $\bB{w}$ defined in Corollary~\ref{cor:entropy:variables}. In all simulations $K=256$ and $\text{CFL = 0.9}$.\label{fig:example2}}
\end{figure}

\setlength{\tabcolsep}{6pt} 

\begin{table}[t!]
	\renewcommand{\arraystretch}{1.1} 
	\centering
	\begin{tabular}{ c|cc|cc|cc|cc|cc}
		\hline
		SWME & 
		\multicolumn{2}{c|}{$h$} & \multicolumn{2}{c|}{$h u_{m}$} & \multicolumn{2}{c|}{$h \alpha_1$} & \multicolumn{2}{c|}{$h \alpha_2$} & \multicolumn{2}{c}{$b$} \\
		\hline
		$K$ & $e_{1,\kappa}$ & $\theta_{1,\kappa}$ &  $e_{2,\kappa}$ & $\theta_{2,\kappa}$ &  $e_{3,\kappa}$ & $\theta_{3,\kappa}$ &  $e_{4,\kappa}$ & $\theta_{4,\kappa}$ &  $e_{5,\kappa}$ & $\theta_{5,\kappa}$\\[0.5ex]
		\hline
		$2^6 $ & 4.08e-07 & - & 3.44e-06 & - & 2.84e-07 & - & 3.52e-07 & - & 1.09e-08 & - \\
		$2^7 $ & 2.56e-08 & 3.99 & 2.15e-07 & 4.00 & 8.16e-09 & 5.12 & 1.04e-08 & 5.07 & 6.82e-10 & 4.00 \\
		$2^8 $ & 1.59e-09 & 4.00 & 1.34e-08 & 4.00 & 1.02e-09 & 3.00 & 7.71e-10 & 3.76 & 4.26e-11 & 4.00 \\
		$2^9 $ & 9.97e-11 & 4.00 & 8.39e-10 & 4.00 & 5.44e-11 & 4.23 & 4.45e-11 & 4.12 & 2.66e-12 & 4.00 \\
		$2^{10} $ & 6.23e-12 & 4.00 & 5.25e-11 & 4.00 & 3.21e-12 & 4.08 & 3.02e-12 & 3.88 & 1.66e-13 & 4.00 \\
		\hline 
        \multicolumn{11}{c}{}\\[-1.6ex]
	\hline SWLME & 
	\multicolumn{2}{c|}{$h$} & \multicolumn{2}{c|}{$h u_{m}$} & \multicolumn{2}{c|}{$h \alpha_1$} & \multicolumn{2}{c|}{$h \alpha_2$} & \multicolumn{2}{c}{$b$} \\
	\hline
	$K$ & $e_{1,\kappa}$ & $\theta_{1,\kappa}$ &  $e_{2,\kappa}$ & $\theta_{2,\kappa}$ &  $e_{3,\kappa}$ & $\theta_{3,\kappa}$ &  $e_{4,\kappa}$ & $\theta_{4,\kappa}$ &  $e_{5,\kappa}$ & $\theta_{5,\kappa}$\\[0.5ex]
	\hline
	$2^6 $ & 3.97e-07 & - & 3.44e-06 & - & 9.43e-07 & - & 9.43e-07 & - & 1.09e-08 & - \\
	$2^7 $ & 2.49e-08 & 4.00 & 2.15e-07 & 4.00 & 5.57e-08 & 4.08 & 5.57e-08 & 4.08 & 6.82e-10 & 4.00 \\
	$2^8 $ & 1.56e-09 & 3.99 & 1.34e-08 & 4.00 & 3.11e-09 & 4.16 & 3.11e-09 & 4.16 & 4.26e-11 & 4.00 \\
	$2^9 $ & 9.86e-11 & 3.99 & 8.40e-10 & 4.00 & 1.56e-10 & 4.32 & 1.56e-10 & 4.32 & 2.66e-12 & 4.00 \\
	$2^{10} $ & 6.22e-12 & 3.99 & 5.25e-11 & 4.00 & 6.89e-12 & 4.50 & 6.89e-12 & 4.50 & 1.66e-13 & 4.00 \\
	\hline
\end{tabular}
\captionsetup{skip=10pt}
\caption{Example 3: Accuracy tests in the $L^2$-norm for the SWME (top table) and SWLME (bottom table), with $N=2$ and polynomial degree $P=3$ at $t=0.05$. The errors $e_{i,\kappa}$ and rates $\theta_{i,\kappa}$ for $i=1,...,5$ are defined in \eqref{eq:L2errors} and \eqref{eq:convergence:rates}, respectively. Results were obtained with a fixed time step $\Delta t = 10^{-5}$.
\label{tab:accuracy}}
\end{table}

\subsection{Example 3: Accuracy tests} \label{ex:3}

In order to determine the order of accuracy produced by our developed numerical scheme, we use as exact solution a manufactured solution, for which an additional source term needs to be supplemented to system \eqref{sys:SWME:vectorial}. We let $\bB{u}_{\rm ex} = \big(h_{\rm ex},h_{\rm ex}u_{m, \rm ex},h_{\rm ex}\bB{\alpha}_{\rm ex}^{\rm T}, b\big)^{\rm T}$ be the smooth manufactured exact solution defined component-wise by
\begin{align*}
 &b(x) = 2 + \tfrac{1}{2} \sin\big(\sqrt{2}\pi x\big),\qquad h_{\rm ex}(x,t)    = \Big(7 + \cos\big(2\sqrt{2}\pi x\big) \cos\big(2\pi t\big)\Big) - b(x),\\
&u_{m,\rm ex}(x,t) = \tfrac{1}{2}, \qquad \alpha_{i,{\rm ex}}(x,t)  = \tfrac{1}{2}\qquad \text{for }i = 1,\dots,N,
\end{align*}
for all $x\in \big[0,\sqrt{2}\,\big] =\Omega$ and time $t\geq 0$. The corresponding source term related to the chosen manufactured solution is then obtained by replacing $\bB{u}_{\rm ex}$ into the left-hand side of \eqref{sys:SWME:vectorial}, that is
\begin{align*}
 \mbf{S}_{\rm ex}(x,t) = \partial_t\bB{u}_{\rm ex} + \partial_x \mbf{f}(\bB{u}_{\rm ex}) + \mbf{B}(\bB{u}_{\rm ex})\partial_x\bB{u}_{\rm ex}.
\end{align*}
We let $\bB{u}_{\kappa}^n$ be the discrete solution computed with our numerical scheme at time $t^n$ with a fixed meshsize $\kappa = \Delta x = \sqrt{2}/2^l$ and $l\in \mathbb{N}$, where the number of elements is $K= 2^l$. Then, to compute the numerical errors due to the spatial approximations, we compute the following $L^2$-error of each of the unknown at a fixed time $t^n$
\begin{align} \label{eq:L2errors}
e_{i,\kappa}(t^n) &= \|\left(\bB{u}^n_{\kappa} - \bB{u}_{\rm ex}(t^n)\right)_i\|_{L^2(\Omega)},
\end{align}
with $i = 1,\dots,N+3$, for a sequence of mesh refinements of $K = 2^{6},...,2^{10}$ elements. Note that since the bathymetry function $b$ is constant in time, the numerical error related to this component is given by its $L^2$-projection into the discontinuous Galerkin space. The rate of convergence between two consecutive values $\tilde{\kappa} = \sqrt{2}/2^{l-1}$ and $\kappa = \sqrt{2}/2^{l}$ is defined as
\begin{align} \label{eq:convergence:rates}
\theta_{j,\kappa}(t^n) = {\rm log}(e_{j,\tilde{\kappa}}^n/e_{j,\kappa}^n)/ {\rm log}(2),\qquad \text{for }j=1,\dots, N+3.
\end{align}
Furthermore, we fix the modelling order to $N=2$ and use a fixed time step on each set of examples. In Table~\ref{tab:accuracy}, we present the numerical errors and rates of convergence for the SWME \eqref{sys:SWME} and SWLME \eqref{sys:SWLME} for polynomial degree $P=3$ and $\Delta t=10^{-5}$, at final time $t=0.05$. For both models, the results show that the numerical errors decrease with respect to the mesh refinement and the orders of convergence, for most of the components, tend to approximately $P+1$, which is the expected order of convergence for a nodal discontinuous Galerkin approximation. Furthermore, we observe that although the approximation related to the computation of $A_{ijk}$ and $B_{ijk}$ is not in effect for the SWLME (bottom table), the errors related to $h$ and $h u_m$ do not differ significantly to the SWME (top table). In the particular case of the moment components $h\alpha_1$ and $h\alpha_2$, the errors in the SWLME are slightly larger than those for the SWME; however, the convergence rates are also higher.

\begin{figure}[t!]
	\centering
	\includegraphics[width=0.9\textwidth]{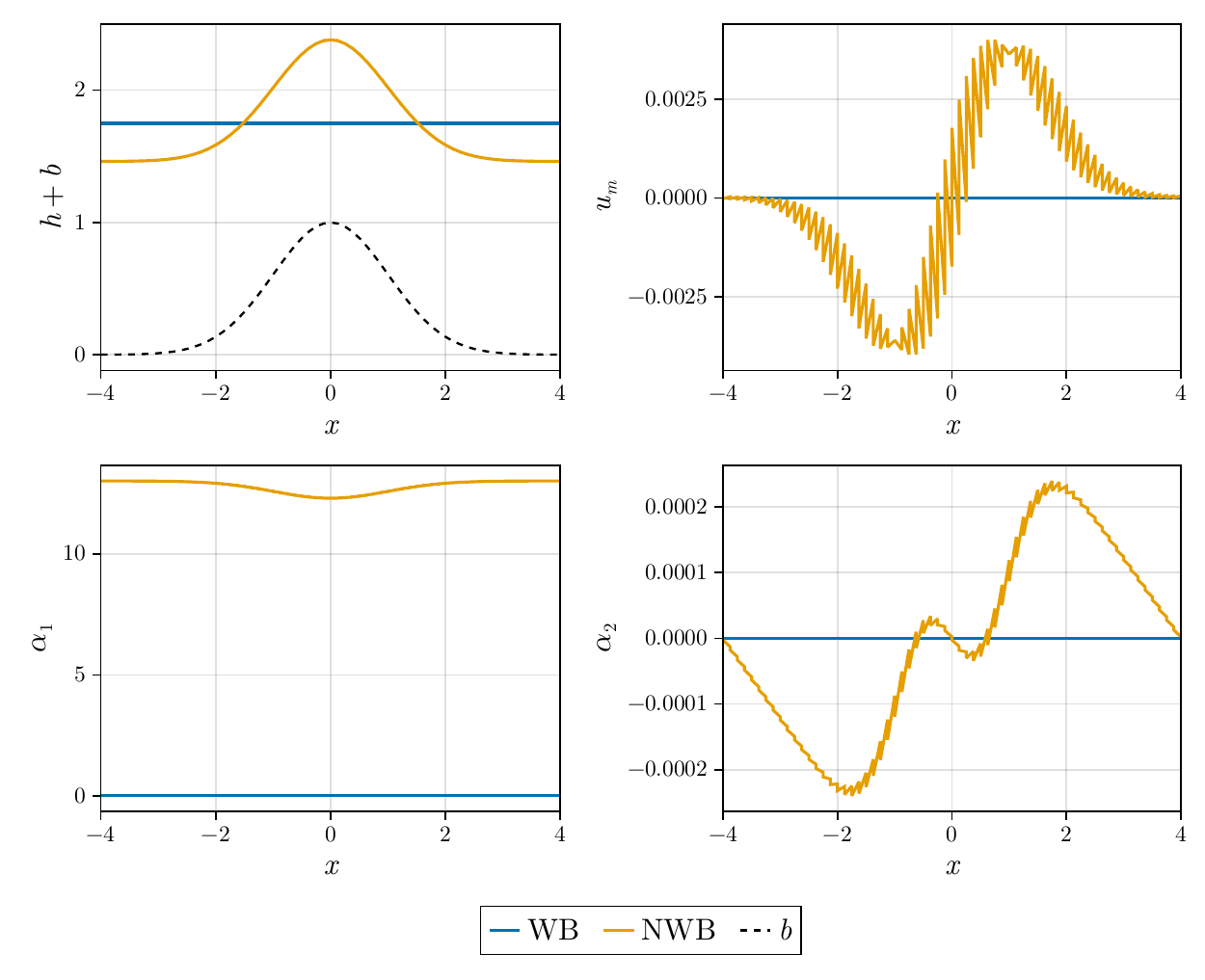} 
	\caption{Example 4: Numerical solutions of the SWME \eqref{sys:SWME} for the lake-at-rest test case at $t=8000$ using the well-balanced (WB) and the non well-balanced (NWB) scheme. Results are obtained on $K=64$ elements with polynomial degree $P=1$ and $\text{CFL}=0.9$. \label{fig:example4}}
\end{figure}

\subsection{Example 4: Lake-at-rest test}\label{ex:4}

We consider a test case to verify numerically that the scheme proposed in \eqref{eq:es_dgsem} preserves the lake-at-rest steady state \eqref{eq:lake_at_rest} for the SWME as shown in Lemma~\ref{lemma:well_balancedness}. 
For this we consider the domain $\Omega = [-4, 4]$ with periodic boundary conditions on which we prescribe the following initial conditions
\begin{equation}\label{eq:IC:example4}
	\begin{aligned}
	H(x,0) &= 1.75, \quad b(x) = {\rm e}^{- x^2 / 2}, \quad h(x,0) = H(x,0) - b(x), \\
	u_m(x,0) &= \alpha_1(x,0) = \alpha_2 (x,0) = 
	\begin{cases}
		-10^{-3} & \text{for } -1 \leq x < 0, \\
		  10^{-3} & \text{for }  0 \leq x \leq 1,\\
		     0 & \text{for }  1 < |x|,
	\end{cases}
	\end{aligned}
\end{equation}
for all $x\in \Omega$. The functions in~\eqref{eq:IC:example4} represent a minor perturbation in the velocity and moments around a lake-at-rest with smooth bottom topography. 
We further set the gravitational acceleration $g=9.812$ and consider the SWME with $N=2$ moments. 
To obtain the numerical results, we discretize the domain into $64$ equidistant elements and compute the solution from $t_0 = 0$ up to a final time $t_{\rm end}=8000$ to examine long time behaviour.

In Figure~\ref{fig:example4}, we present numerical results for polynomial degree $P=1$ at final time, which are obtained with our proposed ES scheme \eqref{eq:es_dgsem} and a non well-balanced scheme, where we replaced the interface dissipation with the standard Rusanov dissipation term. Additionally, in Figure~\ref{fig:example41}, we present time series data for the total entropy and the lake-at-rest error in the water height for the same configuration.
\begin{figure}[t!]
	\centering
	\includegraphics[width=0.9\textwidth]{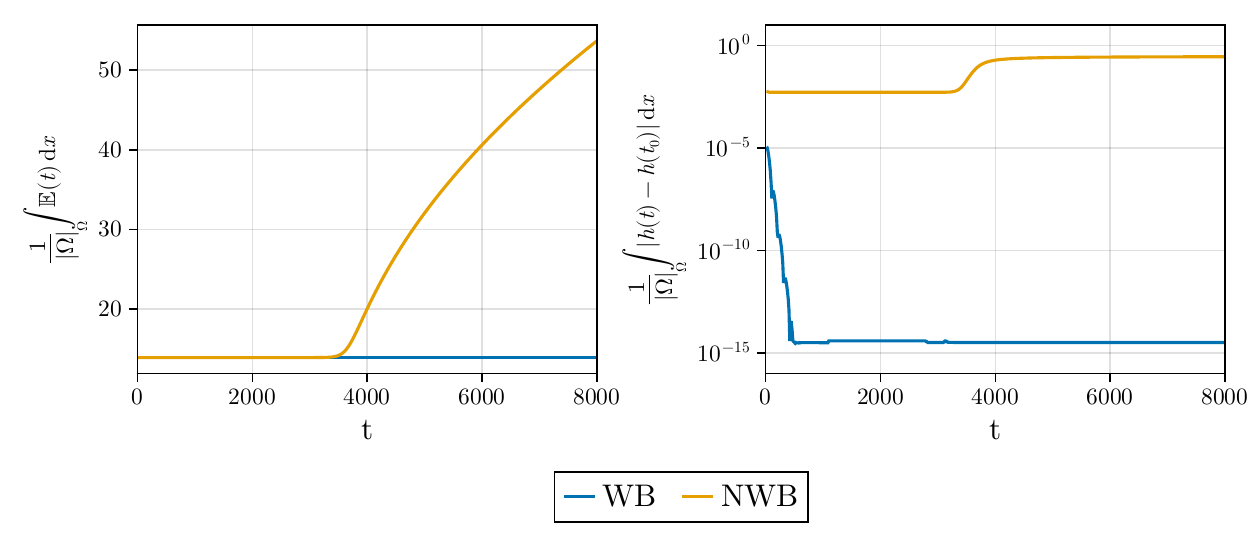} 
	\caption{Example 4: Time series data for the total entropy and the lake-at-rest error for the lake-at-rest test case using the well-balanced (WB) and the non well-balanced (NWB) scheme. Results are obtained on $K=64$ elements with polynomial degree $P=1$ and $\text{CFL}=0.9$. \label{fig:example41}}
\end{figure} 

For the ES scheme, we observe that over time the initial perturbation in the velocity profile gets dissipated by the numerical dissipation and even after considerable time we recover the original steady state configuration up to machine accuracy with a maximum node-wise lake-at-rest error of $|h(t_{\rm end}) - h(t_0)| \approx 3.414\cdot 10^{-15}$, which demonstrates the well-balanced property of the scheme. For the non well-balanced scheme, on the other hand, we initially observe oscillations in the water height as is typical when using a non well-balanced scheme for the SWE. In Figure~\ref{fig:example41} this causes the first sudden rise in the lake-at-rest error, that then remains almost constant over considerable time.
However, the initial moment perturbations then keep growing over time until the solution starts to converge towards a completely different solution, which becomes evident through a noticeable rise in both total entropy and lake-at-rest error shortly before $t = 4000$. At later times the solution then seemingly approaches a different steady state that is depicted in Figure~\ref{fig:example4}.

The results verify the well-balanced property for our ES scheme as established in Lemma~\ref{lemma:well_balancedness}. Furthermore, the behaviour for the non well-balanced scheme shows how violations of the entropy inequality and the well-balanced property may cause artificial oscillations and non-physical solutions.

\section{Conclusions} \label{sec:conclusions}

We developed energy equations for the general shallow water moment equations (SWME) and the linearized shallow water moment equations (SWLME), and established the respective entropy variables. Notably, the derived energy function and entropy variables coincide for both models. We use the derived entropy variables to show that two of the standard models of friction used in moment models, the Newtonian slip and Newtonian Manning, are entropy dissipative.

The results of the continuous entropy analysis were then used to construct an entropy stable and well-balanced discontinuous Galerkin spectral element method (DGSEM) that recovers the energy equations for the SWME in the semi-discrete case.
For the spatial approximation, an augmented system with trivial evolution equation for the bottom topography was introduced to incorporate the bottom topography source term into the nonconservative product. The proposed DGSEM was written in fluctuation form to account for the nonconservative products and a special flux-differencing volume integral in combination with entropy conservative numerical fluxes was introduced to ensure entropy conservation.

For the derivation of these entropy conservative numerical fluxes, a key factor was the hierarchical character of the SWME, which allowed us to derive the entropy conservative fluxes sequentially from those known for the classical shallow water equations \cite{Ersing2024}. A drawback of our constructed entropy conservative flux is that it is specifically tailored for path-integrals (related to non-conservative terms) approximated through trapezoidal rule in combination with linear paths.

An entropy stable variant of the proposed DGSEM, was then obtained by adding a modified Rusanov dissipation term, written in terms of entropy variables, to the fluctuations at element interfaces. We further demonstrated that the resulting method exactly preserves the lake-at-rest steady state. A shock capturing method in the line of \cite{Hennemann2021} is incorporated as an additional functionality to improve the robustness in the presence of discontinuities.
Finally, the theoretical findings and the performance of the numerical method were demonstrated in a series of numerical test cases. 

Extensions of this work can be conducted in a variety of research directions. A direct generalization of the total energy of the system and entropy variables is to consider domains in two horizontal spatial dimensions. The models here presented can also be extended to the non-hydrostatic case or coupled to the Exner equations for sediment transport. A detailed study regarding entropy dissipative models of friction for granular flows is also of interest. With respect to the numerical approximation, further studies can be conducted regarding the approximation of the path integrals related to the nonconservative terms and the corresponding construction of the entropy fluxes. 
More elaborated schemes, including extensions to positivity-preserving schemes that are well-balanced for general steady-state solutions are also of relevance.

\section*{CRediT authorship contribution statement}
\noindent {\bf J. Careaga:} Conceptualization, Methodology, Investigation, Software, Visualization, Writing – Original Draft.\\
{\bf P. Ersing:} Conceptualization, 
Methodology, 
Investigation, 
Software,  
Visualization, 
Writing – Original Draft.\\
{\bf J. Koellermeier:} Conceptualization, 
Methodology, 
Investigation,
Funding Acquisition,
Writing – Review \& Editing.
{\bf A.R. Winters:}
Conceptualization, 
Methodology,
Funding Acquisition,
Software,
Supervision,
Writing – Review \& Editing.

\section*{Declaration of competing interest}

The authors declare that they have no known competing financial interests or personal relationships that could have appeared to influence the work reported in this paper.

\section*{Acknowledgments}

J. Careaga and J. Koellermeier are supported by the Dutch Research Council (NWO) through the ENW Vidi project
HiWAVE with file number VI.Vidi.233.066. P. Ersing and A.R. Winters were supported by Vetenskapsrådet, Sweden [Grant agreement 2020-03642 VR].

\appendix
\section{Proof of Equation~\ref{eq:condition}}\label{sec:appendix:A}
\label{sec:proof}

\begin{proof}
We proceed to prove that $\widetilde{B}_{ijk} + \widetilde{A}_{kji} + \widetilde{B}_{kji} = 0$ for all $i,j,k = 1,\dots,N$ as follows. We first recall that both $A_{i j k}$ and $B_{i j k}$ are defined in \eqref{eq:Aijk_Bijk} and that $A_{i j k} = (2i+1) \widetilde{A}_{i j k}$ and $B_{i j k} = (2i+1) \widetilde{B}_{i j k}$, respectively. Then, we have
\begin{align}
\begin{aligned}
    \widetilde{B}_{ijk} +  \widetilde{A}_{kji} + \widetilde{B}_{kji}
    =& \int_0^1 \phi_i^{\prime}\left(\int_0^\zeta \phi_j(s) \, {\rm d} s\right) \phi_k \, {\rm d} \zeta + \int_0^1 \phi_k \phi_j \phi_i \, {\rm d} \zeta
    \, + \int_0^1 \phi_k^{\prime}\left(\int_0^\zeta \phi_j(s) \, {\rm d} s\right) \phi_i \, {\rm d} \zeta \\
    =&\, \int_0^1 \Big(\phi_i^{\prime} \phi_k \Phi_j + \phi_k^{\prime} \phi_i \Phi_j + \phi_i \phi_j \phi_k \Big)\, {\rm d}\zeta, \label{eq:proof1}
\end{aligned}
\end{align}
where $\Phi_j(\zeta) = \displaystyle \int_{0}^{\zeta} \phi_j(s) \, {\rm d} s$. Now, we use integration by parts for the last term in \eqref{eq:proof1} to obtain
\begin{align}
    \int_0^1  \phi_k \phi_j \phi_i \, {\rm d}\zeta  = \Big[ \phi_k(\zeta) \phi_i(\zeta) \Phi_j(\zeta)\Big]_0^1 - \int_0^1 \phi_k \phi_i^{\prime}\Phi_j \, {\rm d} \zeta - \int_0^1 \phi_k^{\prime} \phi_i\Phi_j \, {\rm d} \zeta.
\end{align}
Inserting this into \eqref{eq:proof1} finally yields
\begin{align}
    \int_0^1 \Big(\phi_i^{\prime} \phi_k \Phi_j + \phi_k^{\prime} \phi_i \Phi_j + \phi_i \phi_j \phi_k\Big) \ {\rm d}\zeta = \Big[ \phi_k(\zeta) \phi_i(\zeta) \Phi_j(\zeta)\Big]_{0}^{1} = 0,
\end{align}
where we made use of $\Phi_j(0) = 0$ and $\Phi_j(1) = 0$ for all $j =1,\dots,N$.
\end{proof}

\section{Computation of moment model components}\label{sec:computeTensors}

The moment model introduces two tensors and one matrix built from the shifted Legendre basis functions.
The shifted Legendre polynomials can be created from the Rodrigues' formula
\begin{equation*}
\phi_j(\zeta) = \frac{1}{j!}\frac{{\rm d}^j}{{\rm d}\zeta}^j\left(\zeta - \zeta^2\right)^j, \qquad \text{for } j = 1,\ldots,N,
\end{equation*}
where $\zeta\in[0,1]$.
However, for the purpose of computing quantities like $B_{ijk}$, it is convenient to exploit that the Legendre polynomials and their derivatives can be constructed from recurrence relations.
In particular, the shifted Legendre basis comes from the three term recurrence
\begin{equation*}
\phi_0 = 1,\quad
\phi_1 = 1 - 2\zeta,\quad
\phi_j = \tfrac{2j-1}{j}(1 - 2\zeta) \phi_{j-1} - \tfrac{j-1}{j}\phi_{j-2},\qquad \text{for } j = 2, ..., N,
\end{equation*}
and its derivatives come from the recurrence
\begin{equation*}
\phi'_0 = 0,\quad
\phi'_1 = -2,\quad
\phi'_j = \phi'_{j-2} - 2 (2j-1) \phi_{j-1},\qquad \text{for }j = 2, ..., N.
\end{equation*}

We first create the $\bB{C}$ matrix \eqref{eq:CMatrix} needed to include friction in the moment model.
We use the recurrence relation for the shifted Legendre polynomial derivatives to build the necessary components of the integral.
The terms in the integrand $\phi'_i$ and $\phi'_j$ are polynomials of degree at most $N-1$.
So, the highest degree of the integrand is a polynomial of degree $(N-1) + (N-1) = 2N - 2$. 
We compute the integrals for the matrix $C_{ij}$ with $i,j=1,\ldots,N$, with Legendre-Gauss (LG) quadrature.
This quadrature is defined on the interval $[-1,1]$, so we use the affine map
\[
\zeta = \tfrac{1}{2}(\xi + 1),
\]
where $\zeta\in[0,1]$ and $\xi\in[-1,1]$.
Then the integral becomes
 \begin{equation*}
    {C}_{ij} = (2i+1)\int_0^1\phi_i'(\zeta)\phi_j'(\zeta)\,{\rm d}\zeta = \frac{2i+1}{2}\int_{-1}^1\phi_i'(\xi)\phi_j'(\xi)\,{\rm d}\xi,
 \end{equation*}
where $i,j=1,\dots,N$.
By design, LG quadrature with $M+1$ quadrature points is exact for polynomials up to degree $2M+1$.
Thus, we select $M = \lceil(2N - 3) / 2 + 1\rceil$ nodes to guarantee that the quadrature is exact for the construction of the ${\bB{C}}$ matrix.

Next, we create the $B_{ijk}$ tensor \eqref{eq:Aijk_Bijk} for the moment equations.
We use the recurrence relations for the shifted Legendre polynomials and their derivatives to build the necessary components of the integral.
In the integrand, we have $\phi'_i$ which is a polynomial of degree at most $N-1$ and $\phi_j$ and $\phi_k$ are polynomials of degree $N$.
Therefore, $\int_0^{\zeta} \phi_j(s)\,{\rm d}s$ is a polynomial of degree $N+1$ and the integrand for $B_{ijk}$ is, at most, a polynomial of degree $(N-1) + (N+1) + N = 3N$.
We, again, compute the integral with LG quadrature where we select $M = \lceil(3N - 1) / 2  + 1\rceil$ nodes to ensure that the quadrature will be exact for the construction.
As before, we use an affine map on the outer integral to have
\begin{align*}
B_{i j k} 
= \frac{2 i+1}{2} \int_{-1}^1 \phi_i^{\prime}(\xi)\left(\int_0^{\zeta_m} \phi_j(s)\,{\rm d}s\right) \phi_k(\xi)\,{\rm d}\xi,
\end{align*}
where
\begin{align*}
\zeta_m = \tfrac{1}{2}(\xi_m + 1),\qquad \text{for } m = 0,\ldots,M.
\end{align*}
To compute the inner integral we apply another affine transformation
\begin{align*}
s = \tfrac{1}{4}(\xi_m + 1)(\eta + 1),\qquad\text{where } \eta\in[-1,1].
\end{align*}
to have
\begin{align*}
\int_0^{\zeta_m} \phi_j(s)\,{\rm d}s
    = \tfrac{1}{4}(\xi_m + 1)\int_{-1}^1 \phi_j\left(\tfrac{1}{4}(\xi_m + 1)(\eta + 1)\right)\,{\rm d}\eta.
\end{align*}
We then apply LG quadrature to this inner integral with the same $M+1$ points as the outer integral.
This choice of $M$ for the interior integral is overkill in terms of quadrature accuracy, as fewer LG nodes are required for exact integration of this polynomial; however, this choice avoids the need to compute an additional set of quadrature nodes and weights.

Finally, we compute the tensor $A_{ijk}$ \eqref{eq:Aijk_Bijk}, which is the product of three Legendre polynomials.
There exists a closed form for this product due to Adams~\cite{Adams1878}, but it is computationally more efficient to create the $A_{ijk}$ tensor from the already computed $B_{ijk}$ tensor using the relation \eqref{eq:condition}, i.e.,
\begin{align}
     \widetilde{A}_{kji} = -(\widetilde{B}_{ijk} +\widetilde{B}_{kji}),
\end{align}
with $i,j,k = 1,\ldots,N$.

\section{Symmetric positive definiteness of the entropy Hessian}\label{sec:appendix:C}

\begin{lemma}\label{lemma:spd:hessian}
    The Hessian  $\mbf{H}^{-1}$ of the entropy function \eqref{eq:entropy:hessian} with respect to the conservative variables $\mbf{u}$ is symmetric positive definite for $h > 0$.
\end{lemma}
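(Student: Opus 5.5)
Symmetry is immediate from the explicit form \eqref{eq:entropy:hessian}, since it is the Jacobian of the gradient $\bB{w}$ of the smooth function $\mathbb{E}$ (equivalently, one reads off that the off-diagonal entries in the first row and first column coincide). For positive definiteness, I will not compute eigenvalues. Instead, I will show that the quadratic form $\bB{v}^{\rm T}\mathbf{H}^{-1}\bB{v}$ is a sum of squares with positive weights. Write $\bB{v} = (v_0, v_1, \bB{v}_\alpha^{\rm T})^{\rm T}\in\mathbb{R}^{N+2}$, with $\bB{v}_\alpha = (v_{\alpha,1},\dots,v_{\alpha,N})^{\rm T}$.

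Expanding with the entries of \eqref{eq:entropy:hessian} and multiplying by $h>0$ gives
\begin{align*}
h\,\bB{v}^{\rm T}\mathbf{H}^{-1}\bB{v} = g h\, v_0^2 + \big(u_m^2 v_0^2 - 2u_m v_0 v_1 + v_1^2\big) + \sum_{i=1}^N \frac{1}{2i+1}\big(\alpha_i^2 v_0^2 - 2\alpha_i v_0 v_{\alpha,i} + v_{\alpha,i}^2\big).
\end{align*}
Completing squares then yields
\begin{align*}
h\,\bB{v}^{\rm T}\mathbf{H}^{-1}\bB{v} = g h\, v_0^2 + (v_1 - u_m v_0)^2 + \sum_{i=1}^N \frac{(v_{\alpha,i} - \alpha_i v_0)^2}{2i+1} \;\geq\; 0.
\end{align*}
The key step is to split the $(1,1)$ entry $gh + u_m^2 + \sum_i \alpha_i^2/(2i+1)$ so that each piece $u_m^2$ and $\alpha_i^2/(2i+1)$ pairs with its corresponding off-diagonal entry. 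The remaining piece $gh$ stays as a stand-alone positive term because $g>0$ and $h>0$.

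For strict positivity I check when equality holds. The term $gh\,v_0^2$ vanishes only if $v_0=0$. Given $v_0=0$, the remaining squares force $v_1=0$ and $v_{\alpha,i}=0$ for every $i$. Hence $\bB{v}=\bB{0}$, and $\mathbf{H}^{-1}$ is positive definite whenever $h>0$.

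I expect no real obstacle here; the only care needed is bookkeeping of the factor $1/h$ and of the weights $1/(2i+1)$. As an alternative, one could write $\mathbf{H}^{-1} = h^{-1}\big(L D L^{\rm T}\big)$ with $L$ unit lower triangular (first column $(1,-u_m,-\alpha_1,\dots,-\alpha_N)^{\rm T}$) and $D=\operatorname{diag}(gh,1,1/3,\dots,1/(2N+1))$, which is the same computation in matrix form.
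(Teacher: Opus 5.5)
Your proof is correct, but it takes a different route from the paper. The paper uses Sylvester's criterion. It splits $h\mathbf{H}^{-1}$ into a scalar $a$, a vector $\mathbf{b}$ and a diagonal block $\mathbf{C}$. It then applies the Schur complement determinant formula to compute every leading principal minor explicitly, e.g. $\det(\mathbf{H}^{-1}_l) = h^{-l}\prod_{j=1}^{l-2}(2j+1)^{-1}\big(gh + \sum_{i=l-1}^N \alpha_i^2/(2i+1)\big)$, and checks that each is positive.

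You instead write the quadratic form directly as a weighted sum of squares. This is a congruence of $h\mathbf{H}^{-1}$ to $\operatorname{diag}(gh,1,1/3,\dots,1/(2N+1))$. Both arguments rest on the same structural fact: the Schur complement $a-\mathbf{b}^{\rm T}\mathbf{C}^{-1}\mathbf{b}$ of the diagonal block equals $gh$, which is exactly your stand-alone term $gh\,v_0^2$. Your version needs neither Sylvester's criterion nor any determinant bookkeeping. It settles strict positivity in one line and shows at a glance that positivity holds for arbitrary $u_m$ and $\boldsymbol{\alpha}$. The paper's version, in exchange, gives closed-form values for all leading minors.

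One small slip is in your closing matrix remark. If $L$ is unit lower triangular with first column $(1,-u_m,-\alpha_1,\dots,-\alpha_N)^{\rm T}$, then $LDL^{\rm T}$ has $(1,1)$ entry $gh$, not $gh+u_m^2+\sum_i \alpha_i^2/(2i+1)$. The factorization that matches your sum of squares is $h\mathbf{H}^{-1} = L^{\rm T} D L$. This does not affect your main argument.
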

\begin{proof}
Since the matrix is symmetric, positive definiteness follows from Sylvester's criterion if and only if all leading principal minor submatrices have a positive determinant. 
To demonstrate this, first consider the following block decomposition of the entropy Hessian:
\begin{equation}
\mbf{H}^{-1} = \frac{1}{h}\left(\begin{array}{c|c c c c}
        gh + u_m^2 + \sum\limits_{i=1}^N \frac{\alpha_i^2}{2i + 1} & -u_m & - \frac{\alpha_1}{3} & \dots & -\frac{\alpha_N}{2N + 1} \\ \hline
        -u_m & 1 & 0 & \dots & 0 \\
        -\frac{\alpha_1}{3} & 0 & \frac{1}{3} & \ddots & \vdots \\
        \vdots & \vdots & \ddots & \ddots & 0 \\
        -\frac{\alpha_N}{2N+1} & 0 & \dots & 0 & \frac{1}{2N + 1}
    \end{array}\right)
= \frac{1}{h} \left(\begin{array}{c|c} 
a & \mbf{b}^T \\ \hline
\mbf{b} & \mbf{C}
\end{array}\right).
\end{equation}
Using this decomposition and the fact that $\mbf{C}$ is diagonal, the determinants of $\mbf{H}^{-1}$ and its leading principal minors can be computed using the Schur complement formula
\begin{equation}
\det \left(\begin{array}{c|c} 
a & \mbf{b}^T \\ \hline
\mbf{b} & \mbf{C}
\end{array}\right) = \det{\left(\mbf{C}\right)}\det{\left(a - \mbf{b}^T\mbf{C}^{-1}\mbf{b}\right)}.
\end{equation}

Let $\mbf{H}^{-1}_k$ with $k=1,...,N+2$ be the leading principal minor matrices of $\mbf{H}^{-1}$. Then for $h>0$ the determinants
\begin{equation}
    \begin{aligned}
    \det\left(\mbf{H}^{-1}_1\right) &= \frac{1}{h} \left(gh + u_m^2 + \sum\limits_{i=1}^N \frac{\alpha_i^2}{2i + 1}\right) \\
    \det\left(\mbf{H}^{-1}_2\right) &= \frac{1}{h^2} \left(gh + \sum\limits_{i=1}^N \frac{\alpha_i^2}{2i + 1}\right)\\
    \det\left(\mbf{H}^{-1}_l\right) &= \frac{1}{h^l}\prod_{j=1}^{l-2} \frac{1}{2j + 1} \left(gh + \sum\limits_{i=l-1}^N \frac{\alpha_i^2}{2i + 1}\right), \quad l = 3,...,N+2,
    \end{aligned}
\end{equation}
are strictly positive and Sylvester's criterion implies that $\mbf{H}^{-1}$ is symmetric positive definite.
\end{proof}

\bibliographystyle{elsarticle-num}
\bibliography{references}
\end{document}